\newtheorem{theorem}{Theorem}
\newtheorem{corollary}[theorem]{Corollary}
\newtheorem*{questions*}{Questions}
\newtheorem*{mainquestion*}{Main Question} 
\newcommand{\QED}{\end{proof}}
\def\proclaim[#1]{{\bf #1}}
\def\BF#1.{{\bf #1.}}
\newcommand{\Los}{\L o\'s}
\newcommand{\Godel}{G\"odel}
\newcommand{\Levy}{L\'{e}vy}
\newcommand{\X}{{\mathbb X}}
\renewcommand{\P}{{\mathbb P}}
\newcommand{\R}{{\mathbb R}}
\newcommand{\Dbar}{{\overline{D}}}
\newcommand{\Gbar}{{\overline{G}}}
\newcommand{\Mbar}{{\overline{M}}}
\newcommand{\Nbar}{{\overline{N}}}
\newcommand{\Vbar}{{\overline{V}}}
\newcommand{\id}{\mathop{\hbox{\small id}}}
\newcommand{\one}{\mathbbm{1}} 
\newcommand{\surj}{\twoheadrightarrow}
\newcommand{\of}{\subseteq}
\newcommand{\set}[1]{\{\,{#1}\,\}}
\newcommand{\elesub}{\prec}
\newcommand{\ran}{\mathop{\rm ran}}
\newcommand{\Add}{\mathop{\rm Add}}
\newcommand{\Coll}{\mathop{\rm Coll}}
\newcommand{\Ult}{\mathop{\rm Ult}}
\newcommand{\image}{\mathbin{\hbox{\tt\char'42}}}
\newcommand{\plus}{{+}}
\newcommand{\restrict}{\upharpoonright} 
\newcommand{\satisfies}{\models}
\newcommand{\forces}{\Vdash}
\newcommand{\Union}{\bigcup}
\newcommand{\intersect}{\cap}
\newcommand{\Intersect}{\bigcap}
\newcommand{\smalllt}{\mathrel{\mathchoice{\raise2pt\hbox{$\scriptstyle<$}}{\raise1pt\hbox{$\scriptstyle<$}}{\raise0pt\hbox{$\scriptscriptstyle<$}}{\scriptscriptstyle<}}}
\newcommand{\smallleq}{\mathrel{\mathchoice{\raise2pt\hbox{$\scriptstyle\leq$}}{\raise1pt\hbox{$\scriptstyle\leq$}}{\raise1pt\hbox{$\scriptscriptstyle\leq$}}{\scriptscriptstyle\leq}}}
\newcommand{\lt}{\smalllt}
\newcommand{\ltomega}{{{\smalllt}\omega}}
\newcommand{\leqomega}{{{\smallleq}\omega}}
\newcommand{\ltkappa}{{{\smalllt}\kappa}}
\newcommand{\leqkappa}{{{\smallleq}\kappa}}
\newcommand{\boolval}[1]{\mathopen{\lbrack\!\lbrack}\,#1\,\mathclose{\rbrack\!\rbrack}}
\def\[#1]{\boolval{#1}}
\newbox\gnBoxA
\newdimen\gnCornerHgt
\newdimen\gnArgHgt
\def\gcode #1{%
\setbox\gnBoxA=\hbox{$#1$}%
\gnArgHgt=\ht\gnBoxA%
\ifnum     \gnArgHgt<\gnCornerHgt \gnArgHgt=0pt%
\else \advance \gnArgHgt by -\gnCornerHgt%
\fi \raise\gnArgHgt\hbox{\tiny$\ulcorner$} \box\gnBoxA %
\raise\gnArgHgt\hbox{\tiny$\urcorner$}}
\newcommand{\UnderTilde}[1]{{\setbox1=\hbox{$#1$}\baselineskip=0pt\vtop{\hbox{$#1$}\hbox to\wd1{\hfil$\sim$\hfil}}}{}}
\newcommand{\Undertilde}[1]{{\setbox1=\hbox{$#1$}\baselineskip=0pt\vtop{\hbox{$#1$}\hbox to\wd1{\hfil$\scriptstyle\sim$\hfil}}}{}}
\newcommand{\undertilde}[1]{{\setbox1=\hbox{$#1$}\baselineskip=0pt\vtop{\hbox{$#1$}\hbox to\wd1{\hfil$\scriptscriptstyle\sim$\hfil}}}{}}
\newcommand{\UnderdTilde}[1]{{\setbox1=\hbox{$#1$}\baselineskip=0pt\vtop{\hbox{$#1$}\hbox to\wd1{\hfil$\approx$\hfil}}}{}}
\newcommand{\Underdtilde}[1]{{\setbox1=\hbox{$#1$}\baselineskip=0pt\vtop{\hbox{$#1$}\hbox to\wd1{\hfil\scriptsize$\approx$\hfil}}}{}}
\newcommand{\st}{\mid}
\renewcommand{\th}{{\hbox{\scriptsize th}}}
\def\<#1>{\left\langle#1\right\rangle}
\newcommand{\ZFC}{{\rm ZFC}}
\newcommand{\ZF}{{\rm ZF}}
\newcommand{\ZFCm}{\ZFC^-}
\newcommand{\ZFCmm}{\ZFC\text{\tt -}}
\newcommand{\CH}{{\rm CH}}
\newcommand{\GCH}{{\rm GCH}}
\newcommand{\AC}{{\rm AC}}
\newcommand{\DC}{{\rm DC}}
\newcommand{\cell}[1]{\boxit{\hbox to 17pt{\strut\hfil$#1$\hfil}}}
\newcommand{\head}[2]{\lower2pt\vbox{\hbox{\strut\footnotesize\it\hskip3pt#2}\boxit{\cell#1}}}
\newcommand{\boxit}[1]{\setbox4=\hbox{\kern2pt#1\kern2pt}\hbox{\vrule\vbox{\hrule\kern2pt\box4\kern2pt\hrule}\vrule}}
\newcommand{\Col}[3]{\hbox{\vbox{\baselineskip=0pt\parskip=0pt\cell#1\cell#2\cell#3}}}
\newcommand{\tapenames}{\raise 5pt\vbox to .7in{\hbox to .8in{\it\hfill input: \strut}\vfill\hbox to
.8in{\it\hfill scratch: \strut}\vfill\hbox to .8in{\it\hfill output: \strut}}}
\newcommand{\Head}[4]{\lower2pt\vbox{\hbox to25pt{\strut\footnotesize\it\hfill#4\hfill}\boxit{\Col#1#2#3}}}
\newcommand{\Dots}{\raise 5pt\vbox to .7in{\hbox{\ $\cdots$\strut}\vfill\hbox{\ $\cdots$\strut}\vfill\hbox{\
$\cdots$\strut}}}
\newcommand{\df}{\it} 
\begin{document}

\author{Victoria Gitman}
    \address[V.~Gitman]{Mathematics, The Graduate Center of The City University of New York,
         365 Fifth Avenue, New York, NY 10016}
    \email{vgitman@nylogic.org}
    \urladdr{http://boolesrings.org/victoriagitman}

\author{Joel David Hamkins}
    \address[J.~D.~Hamkins]{Philosophy, New York University, 5 Washington Place New York, New York 10003 \&
         Mathematics, The Graduate Center of The City University of New York,
         365 Fifth Avenue, New York, NY 10016 \&
         Mathematics, College of Staten Island of CUNY, Staten Island, NY 10314}
    \email{jhamkins@gc.cuny.edu}
    \urladdr{http://jdh.hamkins.org}

\author{Thomas A. Johnstone}
    \address[T.~A.~Johnstone]{Mathematics, New York City College of Technology, 300 Jay Street, Brooklyn, NY 11201}
    \email{tjohnstone@citytech.cuny.edu}
    \urladdr{http://websupport1.citytech.cuny.edu/faculty/tjohnstone}

\thanks{The research of each of the authors has been supported
in part by PSC-CUNY research grants from the CUNY Research
Foundation. The second author's research has 
additionally been supported by NSF grant DMS-0800762 and Simons Foundation grant 209252, and the second and third author's research has been supported by CUNY Collaborative Incentive Award 80209-06 20. The third author is grateful to the Kurt G\"odel Research Center at the University of Vienna for the support of his 2009--10 post-doctoral position there, funded in part by grant P20835-N13 from the Austrian Science Fund.}

\begin{abstract}
We show that the theory $\ZFCmm$, consisting of the usual
axioms of \ZFC\ but with the power set axiom
removed---specifically axiomatized by extensionality,
foundation, pairing, union, infinity, separation,
replacement and the assertion that every set can be well-ordered---is weaker than
commonly supposed and is inadequate to establish several
basic facts often desired in its context. For example,
there are models of $\ZFCmm$ in which $\omega_1$ is
singular, in which every set of reals is countable, yet
$\omega_1$ exists, in which there are sets of reals of
every size $\aleph_n$, but none of size $\aleph_\omega$,
and therefore, in which the collection axiom fails; there
are models of $\ZFCmm$ for which the \Los\ theorem fails,
even when the ultrapower is well-founded and the measure
exists inside the model; there are models of $\ZFCmm$ for
which the Gaifman theorem fails, in that there is an
embedding $j:M\to N$ of $\ZFCmm$ models that is
$\Sigma_1$-elementary and cofinal, but not elementary;
there are elementary embeddings $j:M\to N$ of $\ZFCmm$
models whose cofinal restriction $j:M\to \Union j\image M$
is not elementary. Moreover, the collection of formulas that are provably equivalent in $\ZFCmm$ to a $\Sigma_1$-formula or a $\Pi_1$-formula is not closed under bounded quantification.
Nevertheless, these deficits of $\ZFCmm$ are completely repaired by
strengthening it to the theory $\ZFCm$, obtained by using collection
rather than replacement in the axiomatization above. These results
extend prior work of Zarach
\cite{Zarach1996:ReplacmentDoesNotImplyCollection}.
\end{abstract}

\title{What is the theory ZFC without power set?}\maketitle

\section{Introduction}

Set theory without the power set axiom is used in
arguments and constructions throughout the subject and is usually
described simply as having all the axioms of \ZFC\ except for the
power set axiom. This theory arises frequently in the large cardinal
theory of iterated ultrapowers, for example, and perhaps part of its
attraction is an abundance of convenient natural models, including
$\<H_\kappa,{\in}>$ for any uncountable regular cardinal $\kappa$,
where $H_\kappa$ consists of sets with hereditary size less than
$\kappa$. When prompted, many set theorists offer a precise list of
axioms: extensionality, foundation, pairing, union, infinity,
separation, replacement and choice.
For instance, the theory is described as ``\ZFC\
with the Power Set Axiom deleted'' by \cite{kanamori:higher} (ch. 19, p. 244) and as
``set theory without the Power Set Axiom'' by \cite{jech:settheory2003} (ch. 19, p. 354) in their treatment of iterated ultrapowers, with \ZFC\ and ``set theory" referencing the list of axioms given above. Various other authors describe the theory as ``\ZFC\ minus the power-set axiom''  \cite{HandbookOfSetTheory:abraham} (ch. 5, p. 380), ``standard axioms of \ZFC\ excluding the powerset axiom''  \cite{HandbookOfSetTheory:neeman} (ch. 22, p. 1883), or verify replacement (rather than collection) when determining whether a given structure satisfies the theory. The authors of the current paper have in the past described the theory as \ZFC\ without the power set axiom \cite{hamkinsjohnstone:unfoldable, gitman:ramsey}.\footnote{In contrast, Zarach in \cite{zarach:unions_of_zfminus_models} explicitly includes the collection scheme instead of replacement in his definition of $\ZF^-$, as does 
Devlin in \cite{Devlin1984:Constructibility} (p. 119). Jensen, also, reportedly used collection rather than replacement in his course notes. The results of this article
show that using collection in place of replacement is a critical
difference.} Let us denote by $\ZFCmm$ the theory
having the axioms listed above with the axiom of choice taken to mean Zermelo's well-ordering principle, which then implies Zorn's Lemma
as well as the existence of choice-functions. These
alternative formulations of choice are not all equivalent without
the power set axiom as is proved by Zarach in~\cite{zarach:unions_of_zfminus_models},\footnote{Zarach~\cite{zarach:unions_of_zfminus_models} credits  Z. Szczpaniak with first showing that there are models of $\ZF^-$ in which choice-functions exist but Zermelo's well-ordering principle fails.} who initiated the program of establishing unintuitive consistency results for set theory without power set, which we carry on in this paper. Since our aim is to emphasize the weakness of the replacement scheme in the absence of
power set, we opt here for the strongest variant of choice.

In this article, we shall prove that this formulation of set theory
without the power set axiom is weaker than may be supposed and is
inadequate to prove a number of basic facts that are often desired
and applied in its context.
Specifically, we shall prove that the following behavior can occur
with $\ZFCmm$ models.
 \begin{enumerate}
  \item (Zarach) There are models of $\ZFCmm$ in which
      the countable union of countable sets is not
      necessarily countable, indeed, in which
      $\omega_1$ is singular, and hence the
      collection axiom scheme fails.
  \item (Zarach) There are models of $\ZFCmm$ in which
      every set of reals is countable, yet $\omega_1$
      exists.
  \item There are models of $\ZFCmm$ in which for every
      $n<\omega$, there is a set of reals of size
      $\aleph_n$, but there is no set of reals of size
      $\aleph_\omega$.
  \item The \Los\ ultrapower theorem can fail for
      $\ZFCmm$ models.
    \begin{enumerate}
      \item There are models $M\satisfies\ZFCmm$
          with an $M$-normal measure $\mu$ on a
          cardinal $\kappa$ in $M$, for which the
          ultrapower by $\mu$, using functions in $M$, is well-founded,
          but the ultrapower map is not elementary.
      \item Such violations of \Los\ can arise even
          with internal ultrapowers on a measurable
          cardinal $\kappa$, where $P(\kappa)$
          exists in $M$ and $\mu\in M$.
      \item There is $M\satisfies\ZFCmm$ in which
          $P(\omega)$ exists in $M$ and there are
          ultrafilters $\mu$ on $\omega$ in $M$,
          but no such $M$-ultrapower map is
          elementary.
   \end{enumerate}
  \item The Gaifman theorem~\cite{gaifman:ultrapowers} can fail for $\ZFCmm$
      models.
    \begin{enumerate}
      \item There are
          $\Sigma_1$-elementary cofinal maps
          $j:M\to N$ of transitive $\ZFCmm$ models, which are not
          elementary.
      \item There are elementary
          maps $j:M\to N$ of transitive
          $\ZFCmm$ models, such that the canonical cofinal
          restriction $j:M\to \Union j\image M$ is
          not elementary.
    \end{enumerate}
 \item Seed theory arguments can fail for $\ZFCmm$  models. There are elementary embeddings $j:M\to N$ of transitive $\ZFCmm$ models and sets \hbox{$S\of\Union j\image M$} such that the seed hull \hbox{$\X_S=\{j(f)(s)\st s\in [S]^{\ltomega}, f\in M\}$} of $S$ is not an elementary submodel of $N$. In this case, the restriction \hbox{$j:M\to \X_S$} is a cofinal $\Sigma_1$-elementary map that is not elementary.
 \item The collection of formulas that are provably equivalent in $\ZFCmm$  to a $\Sigma_1$-formula or a $\Pi_1$-formula is not closed under bounded quantification.
 \end{enumerate}

The subsequent theorems of this article contain all the details,
including additional undesirable behavior for $\ZFCmm$ models. The
counterexample $\ZFCmm$ models we produce can be arranged to satisfy
various natural strengthenings of $\ZFCmm$, to include such
statements as Hartogs' theorem that $\aleph_\alpha$ exists for every
ordinal $\alpha$, or alternatively, the assertion ``{\it I am
$H_{\theta^\plus}$},'' meaning the assertion that $\theta$ is the
largest cardinal and every set has hereditary size at most $\theta$.

The fact that the replacement axiom scheme does not imply the
collection axiom scheme without the power set axiom was first proved
by Zarach \cite{Zarach1996:ReplacmentDoesNotImplyCollection}, who
knew the situation described in statements (1) and (2), using a
general method similar to the one we employ in this article. This
article should therefore be viewed as an extension of Zarach's results,
particularly to the case of the \Los\ and Gaifman theorems, which we
find interesting because these theorems are extensively used in the
context of set theory without power set.

Although the failure of such properties is often thought to revolve around the axiom of choice---there are after all some famous models of $\ZF+\neg\AC$ in which $\omega_1$ is singular and the \Los\ theorem fails---nevertheless, all our counterexample models satisfy the axiom of choice in any of the usual set formulations, including the existence of choice-functions, Zorn's lemma, and Zermelo's well-ordering principle. Thus, our arguments reveal a separate issue arising from the inequivalence without the power set axiom of the replacement and collection axiom schemes. Meanwhile, there are close connections between our models and several of the usual models of $\ZF+\neg\AC$, such as the Feferman-\Levy\ model, which will be apparent in our constructions. Furthermore, although our models satisfy the axiom of choice, they do not satisfy the class choice scheme, asserting that if every $a\in A$ has some $b$ with $\varphi(a,b,Z)$, then there is a function $f$ with $\forall a{\in}A\, \varphi(a,f(a),Z)$, since this implies the collection axiom. In this way, one might view the collection axiom scheme itself as a kind of choice principle.

The failure of the \Los\ and Gaifman theorems for models of
$\ZFCmm$ may seem troubling---these theorems are fundamental in
the theory of ultrapowers and iterated ultrapowers, where the use of
models of set theory without the power set axiom is
pervasive---but nevertheless all is well with these applications, for
the following reason. Namely, all of the problematic issues
identified in this article for the theory $\ZFCmm$ disappear if one
should simply strengthen it to the theory $\ZFCm$, which is axiomatized by the same list of axioms as in the opening of this article where choice is taken to mean that every set can be well-ordered, but the replacement scheme is replaced by the collection scheme.\footnote{ We believe the choice axiom in $\ZFCm$ should refer to Zermelo's well-ordering principle, since in the context of set theory without power set the existence of choice-functions does not suffice to prove that every set can be well-ordered, even if the collection axiom scheme holds (see~\cite{zarach:unions_of_zfminus_models}).} In particular, the
reader can readily check that $\ZFCm$ suffices to prove that
successor cardinals are regular, and models of $\ZFCm$ satisfy the
\Los\ theorem and the Gaifman theorem and so on. The somewhat higher
minus sign shall serve to remind the reader that the theory $\ZFCm$
is stronger than $\ZFCmm$, and it is this stronger version of the
theory that holds in all applications of \ZFC\ without power set of
which we are aware.  For example, if $\kappa$ is an uncountable
regular cardinal, then $H_\kappa$ is easily seen to satisfy the
collection scheme and hence full $\ZFCm$; also, any model of
$\ZFCmm$ with the global choice axiom, in the
form of a global class well-ordering of the universe, must also satisfy $\ZFCm$, since the
global choice class allows us to transform instances of collection
into instances of replacement, by picking the least witness, and
thereby satisfy them. Thus, in all the uses of models of set theory
without power set of which we are aware, one actually has the
stronger theory $\ZFCm$ anyway, and thereby avoids the problematic
issues of $\ZFCmm$ that we identify in this article.

The main point of this paper, therefore, is to reveal what
can go wrong when one naively uses $\ZFCmm$ in a
set-theoretic argument for which one should really be using
$\ZFCm$, and to point out that if one indeed would use
$\ZFCm$, then all standard arguments carry through as
expected. In other words, our point is that $\ZFCmm$ is the
wrong theory, and in almost all applications, set theorists
should be using $\ZFCm$ instead.

Before continuing, perhaps it would be helpful for us to point out where in the usual arguments that we have mentioned one uses collection rather than merely replacement. For example, when proving that $\omega_1$ is regular, or more generally when proving that the countable union of countable sets is countable, one has a countable sequence $\<X_n\st n<\omega>$ of countable sets $X_n$. One would like to use the axiom of choice to select witnessing bijections $f_n:\omega\cong X_n$ and then, with a pairing
function on $\omega$, use the map $\<n,k>\mapsto f_n(k)$ to map $\omega$ surjectively onto $\Union_n X_n$. The problem with carrying out this argument in $\ZFCmm$ is that in order to apply the axiom of choice in the first place, we would need to have a sequence $\<F_n\st n<\omega>$ of nonempty sets $F_n$ consisting of bijections $f:\omega\cong X_n$ to which to apply it. Without the power set axiom, however, we do not know that the collection of all bijections from $\omega$ to $X_n$ forms a set, and so we somehow need first to reduce to a set of witnesses before applying \AC\ to choose individual witnesses. Thus, we would seem to want the collection scheme, which would exactly allow us to do that, and so the argument does work in $\ZFCm$. A similar issue
arises when proving that successor cardinals $\kappa^\plus$ are regular. The results of \cite{Zarach1996:ReplacmentDoesNotImplyCollection} and also this paper show that with only replacement, one simply cannot push the argument through.

A similar issue arises when proving the \Los\ theorem in
the forward direction of the existential case. One has the
ultrapower $j:M\to \Ult(M,\mu)$ and $M\satisfies\exists x\,
\varphi(x,f(\alpha))$ for $\mu$-almost every $\alpha$, where the ultrapower is constructed using functions in $M$. What
one would like to do is to apply the axiom of choice in
order to select a witness $x_\alpha$ such that
$M\satisfies\varphi(x_\alpha,f(\alpha))$, for each $\alpha$
for which there is such a witness. But in order to apply
the axiom of choice here, one must first know that there is
a set of witnesses $x$ such that $\varphi(x,f(\alpha))$
from which to choose, which may be accomplished using collection.  But without
collection, it follows by our results that the argument
simply cannot succeed. Note that replacement in $M$ suffices to prove the \Los\ theorem for $\Delta_0$-formulas since if there is a function $g\in M$ such that $M\satisfies \exists x {\in} g(\alpha)\,\varphi(x,f(\alpha))$ for $\mu$-almost every $\alpha$, then the witnesses can be chosen out of $\;\Union \ran (g)$, which is a set by replacement. Since ultrapower embeddings of models of $\ZFCmm$ are cofinal\footnote{An embedding $j:M\to N$ is \emph{cofinal} if every $x\in N$ is an element of $j(y)$ for some $y\in M$.} by replacement, it follows that they are always $\Sigma_1$-elementary. We will produce a model of $\ZFCmm$ with an ultrapower embedding that is not $\Sigma_2$-elementary and for which the \Los\ theorem fails already for $\Sigma_1$-formulas. This will also demonstrate a failure of the Gaifman theorem.

In its full generality, the Gaifman theorem~\cite{gaifman:ultrapowers} states
that if $M\satisfies \ZFCm$ and $j:M\to N$ is a $\Delta_0$-elementary cofinal map, then $j$ is fully elementary.\footnote{In Gaifman's original formulation, the theorem stated that if $M$ is a model of Zermelo's set theory ($\ZF$ axioms with the separation scheme but no replacement scheme),  and $j:M\to N$ is a $\Delta_0$-elementary cofinal map, then it is fully elementary. Gaifman, then pointed out that the existence of cartesian products in $M$ suffices in the place of power sets, and thus, in our context, the theorem holds if $M\satisfies \ZFCm$.} No additional assumptions are made on $N$, and the models $M$ and $N$ need not be transitive. It is easy to see that $N$ satisfies the pairing axiom, since if $x,y$ are any elements in $N$, then by cofinality there are $a,b\in M$ such that $x\in j(a)$ and $y\in j(b)$, and so, by replacement in $M$, there is the cartesian product $C=a\times b$ so that \hbox{$M\satisfies \forall u{\in} a\,\forall v{\in} b \, \exists w{\in} C \, w{=}\<u,v>$}, which is a $\Delta_0$-formula. It follows that, in both $M$ and $N$, blocks of like quantifiers in formulas may be contracted to a single quantifier through applications of the pairing axiom. The proof now proceeds by induction on the complexity of formulas, and since $j$ is $\Sigma_1$-elementary by cofinality,  the critical case of the induction occurs when $j$ is assumed to be $\Sigma_n$-elementary for some $n>0$ and  \hbox{$M\satisfies\forall x\in a\,\exists y\, \varphi(x,y,p)$} for some formula $\varphi\in \Pi_{n-1}$ and some sets $a,p\in M$. Using collection in $M$, there is a set $b$ such that \hbox{$M\satisfies \forall x{\in} a\,\exists y{\in}b\,\varphi(x,y,p)$}, and since $a\times b$ exists in $M$, one may use separation in $M$ to obtain a set $C$ such that $M$ satisfies \hbox{$\forall x{\in} a\exists y{\in} b\,\langle x,y\rangle \in C$} and \hbox{$\forall x{\in} a\forall y{\in} b\,\big[\langle x,y\rangle\in C\Rightarrow \varphi(x,y,p)\big]$}.
The first statement is $\Sigma_0$, and the second is $\Pi_{n-1}$, so that both statements transfer to $N$ by the inductive hypothesis. It follows that $N\satisfies \forall x{\in} j(a)\,\exists y{\in} j(b)\,\varphi(x,y,j(p))$, completing the proof of the critical case.\footnote{The proof of Gaifman's result becomes a much easier induction on the complexity of formulas, if one assumes at the outset that $N$ is a model of $\ZFCm$, and this folklore version of the theorem was already known before Gaifman's result according to \cite{gaifman:ultrapowers}.} We shall show that the use of collection in $M$ is essential to this argument by producing a cofinal $\Delta_0$-elementary map \hbox{$j:M\to N$} of $\ZFCmm$ models that is not elementary.

An analogous issue as in the \Los\ theorem arises when proving that for a given elementary embedding $j:M\to N$ and some $S\of \Union j\image M$ the \emph{seed hull} of $S$ via $j$, meaning the structure $\X_S=\{j(f)(s)\st s\in [S]^{\ltomega}, f\in M\}$, is an elementary submodel of $N$. This is usually shown by  verifying the Tarski-Vaught test for $\X_S\of N$, and so one has that $N\satisfies \exists y \,\varphi(y,j(f)(s))$ where $s\in j(D)$ for some $D\in M$. One would like to use a Skolem function $g\in M$ for $\varphi(y,f(x))$, meaning that for each $x\in D$, one lets $g(x)$ be any witness $y$ such that $\varphi(y,f(x))$ holds in $M$, if such a witness exists. But, the usual proof that such $g$ exists in $M$ uses collection in $M$, since one can appeal to \AC\ in $M$  to choose witnesses only after sufficiently many such witnesses have been collected to a set in $M$. This shows that $\X_S\elesub N$ in the case when $M\satisfies\ZFCm$, but in our context, when $M\satisfies\ZFCmm$, we will find elementary embeddings $j:M\to N$ and sets $S$ of seeds such that $\X_S\not \elesub N$. Note that replacement in $M$ suffices to see that $\X_S$ is $\Delta_0$-elementary in $N$, since in this case the existential quantifier in the Tarski-Vaught test may be bounded by some $j(h)(s)$ and therefore the witnesses for the Skolem function $g\in M$ can be chosen out of $\Union \ran (h)$. The map $j:M\to \X_S$ is thus cofinal and hence $\Sigma_1$-elementary, but not fully elementary, and so is the map $\pi\circ j:M\to \ran(\pi)$, where $\pi$ denotes the Mostowski collapse of $\X_S$, which means that both maps demonstrate a failure of the Gaifman theorem. Moreover, our maps will allow for seed hulls $\X_S$ that are not generated by a single seed, so that we will get these failures for embeddings that are not ultrapower maps. 

\section{Badly behaved models of $\ZFCmm$}

Let us now begin our project by describing a variety of
models of $\ZFCmm$, in each case making observations about
how that model reveals a compatibility of various
undesirable behaviors with the theory $\ZFCmm$. Although
there is some redundancy in this bad behavior of the models
in that several different example models reveal the same
deficiency in $\ZFCmm$, we have nevertheless included all
the examples to illustrate the range and flexibility of the
constructions. We shall organize our presentation
principally around the construction methods, rather than
around the various undesirable behavior for models of
$\ZFCmm$.

\subsection{The \Levy\ collapse of
$\aleph_\omega$}\label{Section.LevyCollapseAleph_omega} Let
$\Coll(\omega,{\lt}\aleph_\omega)$ be the \Levy\ collapse of
$\aleph_\omega$, meaning the finite-support product
$\prod_n\Coll(\omega,\aleph_n)$, and suppose that
$G\of\Coll(\omega,{\lt}\aleph_\omega)$ is $V$-generic. The cardinal
$\aleph_\omega$ is collapsed to $\omega$ in $V[G]$, but it
remains a cardinal in every model $V[G_n]$, where $G_n=G\cap \P_n$
is the restriction of $G$ to the collapse forcing $\P_n=\prod_{k\leq
n}\Coll(\omega,\aleph_k)$ which proceeds only up to $\aleph_n$. Our
desired model is
$$W=\Union_{n<\omega} V[G_n],$$ which is the union of the forcing
extensions $V[G_n]$ arising during initial segments of the collapse. The model $W$ is a definable class in $V[G]$ using a parameter from $V$ determined by $\Coll(\omega,{\lt}\aleph_\omega)$ to define the ground model $V$ (by a result of Laver's \cite{Laver2007:CertainVeryLargeCardinalsNotCreated}) and the filter $G$. Although the definability of $W$ will not be required for this argument, it will be used in the later constructions. Note that $W$ depends not only on $V[G]$, but also on the way
that $G$ is presented, and there are automorphisms of the forcing
that do not preserve $W$.\footnote{For example, one could view each
stage of the forcing as also adding a Cohen generic real via an
isomorphism of $\P_n$ with the product of $\P_n$ and the Cohen
poset, and then consider an automorphism in which the sequence of
these reals is added before the first collapsing stage via an
isomorphism of $\Coll(\omega,{<}\aleph_\omega)$ with the countable
product of Cohen posets and $\Coll(\omega,{<}\aleph_\omega)$, so that
for one generic filter, the sequence is not in $W$, but for the
automorphic copy of the generic filter, it is.} Nevertheless, any
automorphism of any particular $\P_n$ induces an automorphism of
$\P$ that preserves $W$. Note that every stage $m$ of the forcing
has for any two conditions with the same domain, an automorphism
generated by mapping one to the other, and combining these
automorphisms for $m\leq n$ produces automorphisms of $\P_n$
that will be used later in the argument.

Let us now verify that $W\satisfies\ZFCmm$, by checking each axiom
in turn. Most of them follow easily. For example, extensionality and
foundation hold in $W$, because it uses the $\in$-relation of $V[G]$
and is transitive; union and pairing hold since $W$ is the union of
a chain of models of \ZFC, which each have the required union and
pairing sets; infinity holds since $\omega\in V\of W$; and Zermelo's
well-ordering principle holds in $W$, because every set in $W$ is in
some $V[G_n]$, where it has a well-order that survives into $W$; and
separation follows from replacement, and so needn't be considered
separately.

So it remains to verify only the replacement axiom scheme in $W$.
Suppose that $A$ and $z$ are sets in $W$, and that
$W\satisfies\forall a{\in}A\,\exists! b\, \varphi(a,b,z)$, where the
exclamation point expresses that there is a unique such $b$. We may
assume without loss of generality that $A,z\in V$, since otherwise
we have $A,z\in V[G_n]$ for some $n$ and we may replace $V$ with
$\Vbar=V[G_n]$, using the fact that forcing with the tail of the
product gives rise to the same $W$. We claim next that for any $a\in
A$, the unique witness $b$ for $a$ is also in $V$. Fix any $a\in A$,
and let $b$ be the unique witness such that
$W\satisfies\varphi(a,b,z)$. It must be that $b\in V[G_n]$ for some
$n$, and so $b=\dot b_G$ for some $\P_n$-name $\dot b$. Note that we
specifically chose a $\P_n$-name to ensure that its interpretation
by any generic $\Gbar$ will end up in $V[\Gbar_n]$, a circumstance
crucial to the later part of the argument. Suppose that some
condition $p\in G$ forces $\varphi^{\dot W}(\check a,\dot b,\check
z)$, where $\dot W$ is the forcing language definition of the class
$W$ using the canonical name $\dot G$ for the generic
filter. Note that the stage $n+1$ forcing
$\Coll(\omega,\aleph_{n+1})$ is isomorphic to the product
$\P_n\times \Coll(\omega,\aleph_{n+1})$, and so we may view the
stage $n+1$ forcing as first adding another mutually generic filter
$\Gbar_n\of\P_n$ and then performing the rest of the forcing. That
is, $\P$ is isomorphic to the forcing that forces with two copies of
$\P_n$, first using the actual copy and then using an additional
copy, before the product proceeds with stage $n+1$ and the
rest. By swapping these two copies of $\P_n$ inside
$\P_{n+1}$, we produce a slightly different $V$-generic filter
$\Gbar$, such that $G_n$ and $\Gbar_n$ are mutually generic for
$\P_n$, but $V[G]=V[\Gbar]$. Since we are rearranging the filter
only within $\P_{n+1}$, it follows that $\dot W_G=\dot W_{\Gbar}$.
We may furthermore assume without loss of generality that
$p\in\Gbar$, by applying if necessary an additional automorphism (as
described above) to the second copy of $\P_n$ that we constructed in
$\P_{n+1}$, before we perform the swap of the two copies of $\P_n$.
It follows that $\varphi^W(a,\dot b_{\Gbar},z)$, and so by the
uniqueness of $b$, we have $b=\dot b_G=\dot b_\Gbar$.
This equation implies that $b$ is in $V[G_n]\intersect V[\Gbar_n]$,
which is equal to $V$ since these filters are mutually generic, by a result of Solovay \cite{Solovay1970:AModelInWhichEverySetIsLebesgueMeasurable}.
Thus, $b\in V$ as we claimed. Now, for any given $a\in A$ the
question of whether a given $b\in V$ has $\varphi^W(a,b,z)$ must be
decided by $\one$, since $\P$ is weakly homogeneous by automorphisms
(as described above) affecting only finitely many coordinates and
such automorphisms do not affect the value of $\dot W$.\footnote{A
forcing $\P$ is \emph{weakly homogeneous} if for any two conditions
$p$ and $q$ there is an automorphism $\pi$ of $\P$ such that
$\pi(p)$ is compatible with $q$. Since
$\pi(\boolval{\varphi(\tau)})=\boolval{\varphi(\tau^\pi)}$, where
$\tau^\pi$ is the name produced by hereditary application of $\pi$,
it follows for such forcing that the Boolean value of any statement
whose parameters are not affected by $\pi$ is either $0$ or $\one$.}
Thus, the set $\set{b\st \exists a{\in}A\, \varphi^W(a,b,z)}$ exists
in $V$ by replacement in $V$, and hence also exists in $W$ as
desired. This completes the argument that $W$ is a model of replacement, and so $W\satisfies\ZFCmm$.

We shall now make further observations about this model $W$ in order
to prove that $\ZFCmm$ is consistent with various undesirable
behaviors. The first few observations were made already by Zarach
\cite{Zarach1996:ReplacmentDoesNotImplyCollection}, using a method
fundamentally similar to ours. In the subsequent sections, we shall
modify this construction in order to obtain failures of the \Los\
and Gaifman theorems, among others.

\begin{theorem}[Zarach]\label{Theorem.Omega1Singular}
It is consistent with $\ZFCmm$ that $\omega_1$
exists but is singular and hence that a countable union of
countable sets can be uncountable.
\end{theorem}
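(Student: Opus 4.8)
The plan is to verify that in the model $W=\Union_{n<\omega}V[G_n]$ just constructed, the ordinal $\aleph_\omega^V$ serves as $\omega_1^W$ and that it is singular there. First I would note that every ordinal $\gamma<\aleph_\omega^V$ is countable in $W$: such a $\gamma$ satisfies $\gamma<\aleph_n^V$ for some $n$, and the forcing $\P_n=\Pi_{k\leq n}\Coll(\omega,\aleph_k)$ collapses $\aleph_n^V$ to $\omega$, so a surjection $\omega\to\gamma$ already exists in $V[G_n]\of W$.

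Next I would check that $\aleph_\omega^V$ remains a cardinal in $W$. Any bijection in $W$ between $\aleph_\omega^V$ and a smaller ordinal would lie in some $V[G_n]$; but $\P_n$ has size $\aleph_n^V<\aleph_\omega^V$, and such small forcing preserves the cardinal $\aleph_\omega^V$. Hence $\aleph_\omega^V$ is a cardinal in each $V[G_n]$ and therefore in $W$, and, being a cardinal strictly above $\omega$, it is uncountable in $W$. Together with the previous paragraph this shows that $\aleph_\omega^V$ is the least uncountable ordinal of $W$; in particular $W\satisfies$ ``$\omega_1$ exists'', witnessed by the set $\aleph_\omega^V\in V\of W$, and $\omega_1^W=\aleph_\omega^V$.

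To establish singularity, I would point out that the increasing $\omega$-sequence $\<\aleph_n^V\st n<\omega>$ is definable in $V$ and hence an element of $V\of W$, and is cofinal in $\aleph_\omega^V=\omega_1^W$; thus $W\satisfies\cof(\omega_1)=\omega$, so $\omega_1$ is singular in $W$. Finally, I would observe that the decomposition $\omega_1^W=\Union_{n<\omega}\aleph_n^V$ exhibits $\omega_1^W$ as a union indexed by $\omega$ of sets each of which is countable in $W$, yet whose union is uncountable, so that a countable union of countable sets need not be countable in $W$ (whence, given choice, the collection scheme must also fail in $W$).

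There is no serious obstacle here once $W\satisfies\ZFCmm$ is in hand; the only points needing a little care are that the collapsing surjections and the cofinal sequence genuinely belong to $W$---they do, living respectively in the models $V[G_n]$ and in $V$ itself---together with the standard fact that forcing of size smaller than $\aleph_\omega^V$ preserves the cardinal $\aleph_\omega^V$.
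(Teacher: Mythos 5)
Your proposal is correct and follows essentially the same route as the paper's proof: identify $\omega_1^W=\aleph_\omega^V$ by noting each $\aleph_n^V$ is collapsed in $V[G_n]$ while $\aleph_\omega^V$ survives in every $V[G_n]$ (you justify the latter via smallness of $\P_n$, which the paper leaves implicit), and then witness singularity with the sequence $\<\aleph_n^V\st n<\omega>\in V\of W$. No gaps.
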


\begin{proof}Consider the model $W$ as constructed in the \Levy\
collapse of $\aleph_\omega$ above. Note that every cardinal
$\aleph_n^V$ is collapsed in $V[G_n]$ and hence in $W$, but
$\aleph_\omega^V$ remains a cardinal in every $V[G_n]$ and hence
also in $W$. Thus, $\omega_1^W=\aleph_\omega^V$, which has
cofinality $\omega$ in $V$ and hence in $W$, as witnessed by the
sequence $\<\aleph_n^V\st n\lt\omega>\in V\of W$. So $W$ satisfies
that $\omega_1$ is singular. In particular, $W$ satisfies that
$\omega_1$ is a countable union of countable sets, as
$\omega_1^W=\Union\set{\aleph_n^V\st n\in \omega}$.
\end{proof}

\begin{theorem}[Zarach]\label{Theorem.Collection}
It is consistent with $\ZFCmm$ that the
collection scheme fails. Hence, replacement and collection
are not equivalent without the power set axiom.
\end{theorem}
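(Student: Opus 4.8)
The plan is to reuse the model $W=\Union_{n<\omega}V[G_n]$ from the \Levy\ collapse of $\aleph_\omega$, which we have already verified satisfies $\ZFCmm$, and to exhibit an explicit instance of the collection scheme that fails there. Recall from the proof of Theorem~\ref{Theorem.Omega1Singular} that $\omega_1^W=\aleph_\omega^V$, that each $\aleph_n^V$ is a countable ordinal in $W$ (it is already collapsed in $V[G_n]$), and that $\sup_{n<\omega}\aleph_n^V=\aleph_\omega^V$; in particular $\omega_1^W$ is singular in $W$. The sequence $\vec\kappa=\<\aleph_n^V\st n<\omega>$ lies in $V\of W$ and hence is a set of $W$. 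The quickest conclusion is to invoke the observation from the introduction that $\ZFCm$---that is, $\ZFCmm$ together with collection---proves that every successor cardinal is regular: since $W\satisfies\ZFCmm$ while $\omega_1^W$ is singular, $W$ cannot satisfy collection. For concreteness I would also point to a single failing instance directly, as follows.

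Let $\varphi(n,f)$ be the formula, with parameter $\vec\kappa$, asserting that $f$ is a surjection of $\omega$ onto $\vec\kappa(n)$. Since each $\vec\kappa(n)=\aleph_n^V$ is countable in $W$, we have $W\satisfies\forall n{\in}\omega\,\exists f\,\varphi(n,f)$. If $W$ satisfied the corresponding instance of collection, there would be a set $B\in W$ with $W\satisfies\forall n{\in}\omega\,\exists f{\in}B\,\varphi(n,f)$. Since $B$ is a set of $W$, Zermelo's principle furnishes a well-ordering $<_B$ of $B$ in $W$, and letting $F(n)$ be the $<_B$-least $f\in B$ with $\varphi(n,f)$ defines a function $F$ on $\omega$ that is a set of $W$ by replacement. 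But then the function $g$ on $\omega\times\omega$ given by $g(n,k)=F(n)(k)$ is a set of $W$ by replacement, and it maps the countable set $\omega\times\omega$ onto $\Union_{n<\omega}\vec\kappa(n)=\aleph_\omega^V=\omega_1^W$. Hence $W$ would satisfy that $\omega_1$ is countable, which is absurd. So $W$ fails this instance of collection, and $\ZFCmm$ does not prove the collection scheme.

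For the ``hence'' clause, note that over the common base theory---extensionality, foundation, pairing, union, infinity, separation, and Zermelo's well-ordering principle---the collection scheme together with separation yields every instance of replacement: given $\forall a{\in}A\,\exists!b\,\varphi(a,b)$, collect the witnesses into a set $B$ and then separate out $\set{b\in B\st\exists a{\in}A\,\varphi(a,b)}$. Thus $\ZFCm$ proves all of replacement, whereas by the previous paragraph $\ZFCmm$ does not prove all of collection, so the two schemes are genuinely inequivalent without the power set axiom. The only point requiring real work has already been dispatched in Theorem~\ref{Theorem.Omega1Singular}, namely that $\omega_1^W$ really is the least uncountable ordinal of $W$ and is singular there; everything else is bookkeeping. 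The one mild wrinkle is that the set $B$ delivered by collection may contain irrelevant elements, which is why Zermelo's principle is applied to $B$---any fragment of choice would do---so as to pin down, uniformly in $n$, a genuine sequence of witnesses on which to run replacement.
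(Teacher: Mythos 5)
Your proof is correct and follows essentially the same route as the paper: the same model $W$, and the same failing instance of collection (indeed exactly the instance with domain $\omega$ and the sequence $\<\aleph_n^V\st n<\omega>$ that the paper records in the remark immediately after its proof). The only difference is in how the collecting set $B$ is ruled out --- the paper observes directly that any $B\in W$ lies in some $V[G_n]$ and hence contains no functions collapsing ordinals in $[\aleph_{n+1}^V,\aleph_\omega^V)$, whereas you use a well-order of $B$ and replacement to assemble a surjection $\omega\times\omega\surj\omega_1^W$ and contradict the uncountability of $\omega_1^W$; both are valid, and your variant has the mild advantage of resting only on facts already established in Theorem~\ref{Theorem.Omega1Singular}.
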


\begin{proof}The collection scheme
fails in the model $W$ above because for each countable
ordinal $\alpha$ in $W$ there is a surjective function
$f:\omega\surj\alpha$, but there is no set $B$ in $W$
collecting a family of such functions, since every $B$ in
$W$ arises in some $V[G_n]$ and therefore contains no
functions collapsing the ordinals $\alpha$ in the interval
$[\aleph_{n+1}^V,\aleph_\omega^V)$.
\end{proof}
In the proof above, we could have also obtained a violation of collection using domain $\omega$ instead of $\omega_1$ by considering the sequence $\< \aleph_n^V\mid n<\omega>$, which is an element of $V$ and hence of $W$, and observing that $W$ has bijections $f:\omega\surj\aleph_n$, but a family of such functions cannot be collected.

\begin{theorem}[Zarach]\label{Theorem.SetsOfRealsCountableYetomega1}
It is consistent with $\ZFCmm$ that every set of
reals is countable, yet $\omega_1$ exists.
\end{theorem}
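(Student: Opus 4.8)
The plan is to reuse the model $W=\Union_{n<\omega}V[G_n]$ of Section~\ref{Section.LevyCollapseAleph_omega}, which (irrespective of $\GCH$, since that verification used only Solovay's mutual-genericity theorem, almost homogeneity, and replacement in $V$) satisfies $\ZFCmm$, now arranging in addition that $V\satisfies\GCH$ --- a harmless extra hypothesis, since we are only after a consistency result. Two things must be checked in $W$: that $\omega_1$ exists, and that every set of reals is countable. The first is essentially already done: as in the proof of Theorem~\ref{Theorem.Omega1Singular}, each $\aleph_n^V$ is collapsed to $\omega$ already in $V[G_n]\of W$ while $\aleph_\omega^V$ stays a cardinal in every $V[G_n]$ and hence in $W$, so $\aleph_\omega^V=\omega_1^W$; and this is a bona fide set of $W$, being $\Union\set{\aleph_n^V\st n<\omega}$ with the witnessing sequence already in $V\of W$.

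For the second claim, let $X$ be any set of reals of $W$. Since $W$ is the increasing union of the $V[G_n]$, we have $X\in V[G_n]$ for some $n$, and then $X\of V[G_n]$ by transitivity, so in fact $X\of\R^{V[G_n]}$. Here I would invoke the standard computation that the \Levy\ collapse $\P_n=\Pi_{k\leq n}\Coll(\omega,\aleph_k)$ forces $\CH$ over a model of $\GCH$: one has $|\P_n|=\aleph_n^V$, hence at most $2^{\aleph_n}=\aleph_{n+1}$ antichains and so at most $\aleph_{n+1}^{\aleph_0}=\aleph_{n+1}$ nice names for subsets of $\omega$, giving $(2^{\aleph_0})^{V[G_n]}\leq\aleph_{n+1}^V=\aleph_1^{V[G_n]}$. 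Thus $\R^{V[G_n]}$ has cardinality $\aleph_{n+1}^V$ in $V[G_n]$, so there is an injection of $X$ into $\aleph_{n+1}^V$ lying in $V[G_n]\of W$. But $\aleph_{n+1}^V$ is collapsed to $\omega$ in $V[G_{n+1}]\of W$, and composing these two maps inside $W$ produces an injection of $X$ into $\omega$. Hence $X$ is countable in $W$, as desired.

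The argument is essentially Zarach's, and I do not expect a genuine obstacle here; the only points requiring a little care are the transitivity step keeping $X$ inside $\R^{V[G_n]}$ and the cardinal bookkeeping bounding $(2^{\aleph_0})^{V[G_n]}$ --- and for the latter the crude nice-name count above already suffices, so one need not even appeal to the sharper fact that $\CH$ holds in $V[G_n]$. It is worth contrasting this with Theorem~\ref{Theorem.Omega1Singular}: even though $\omega_1^W$ exists, the reals of $W$ do \emph{not} form a set in $W$, since $\R^{V[G_{n+1}]}\not\of V[G_n]$ for every $n$; so here ``every set of reals is countable'' is not the degenerate assertion that some single countable set contains all reals, but a nontrivial statement about the (proper class of) subsets of $\R$ that happen to lie in $W$.
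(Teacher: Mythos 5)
Your proof is correct and follows essentially the same route as the paper: work in the $W$ from the L\'evy collapse of $\aleph_\omega$, note $\omega_1^W=\aleph_\omega^V$, locate any set of reals inside some $V[G_n]$, bound $\R^{V[G_n]}$ by a nice-name count (the paper assumes only $2^{{\lt}\aleph_\omega}=\aleph_\omega$ where you assume $\GCH$, an immaterial difference for a consistency result), and observe that this bound is collapsed at the next stage. No gaps.
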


\begin{proof}
Consider the model $W$ constructed as above but starting with a ground model $V$ in which $2^{{\lt}\aleph_\omega}=\aleph_\omega$.
Every set of
reals in such a $W$ is in $V[G_n]$ for some $n$, and the reals of this
model have size $\aleph_m$ for some $m$ by a nice name counting argument and the assumption $2^{
{\lt}\aleph_\omega}=\aleph_\omega$. Thus, the reals of every $V[G_n]$ become countable in some further $V[G_m]$ and hence in $W$, so
every set of reals in $W$ is countable in $W$. But $W$ satisfies
that $\omega_1$ exists and indeed $\aleph_\alpha$ exists for every
ordinal $\alpha$, since above $\aleph_\omega$ the cardinals of $W$
agree with the cardinals of $V$.
\end{proof}

The construction is general, and it adapts to forcing with the \Levy\ collapse of $\aleph_\kappa$ to $\kappa$, meaning the bounded-support product $\Coll(\kappa,{\lt}\aleph_\kappa)=\prod_{\beta<\kappa}\Coll(\kappa,\aleph_\beta)$.  If $G_\gamma=G\intersect \P_\gamma$, where $\P_\gamma=\prod_{\beta\leq\gamma}\Coll(\kappa, \aleph_\beta)$ is the forcing up to $\gamma$, then let $W=\Union_{\gamma<\kappa}V[G_\gamma]$. As long as $\kappa$ is a regular cardinal with $\aleph_\beta^{\ltkappa}<\aleph_\kappa$ for all $\beta<\kappa$, it follows by essentially the same arguments as before that $W$ satisfies $\ZFCmm$ but not collection, and that the cardinal $\kappa^+$ exists in $W$ but has cofinality $\kappa$ there.  Moreover, the model $W$ is closed in $V[G]$ under sequences of length less than $\kappa$, so this construction provides a general method for obtaining badly behaved inner models $W\satisfies\ZFCmm$ that are as closed as desired.

As we did in this section, we will start many arguments in this article with a model $V$ of
\ZFC\ and produce a model of the desired theory with
$\ZFCmm$ by finding an inner model of a forcing extension
of $V$. By the usual forcing methods, such as taking a
quotient of a Boolean-valued model, these arguments show
that if \ZFC\ is consistent, then so is $\ZFCmm$ with the
stated extra properties. One can omit the need for this
consistency assumption and prove in \ZFC\ alone that there
are transitive models of $\ZFCmm$ with the desired
properties, simply by forcing over a suitably large
$H_{\theta^\plus}$ rather than $V$ itself. For example, one
could first make a countable elementary submodel of such an
$H_{\theta^\plus}$, and then build the generic extension by
meeting the countably many dense sets.
We carry out several such arguments in section \S\ref{Section.NoLargeCardinals}.
In any case we find the models $W$ obtained by forcing
over models of \ZFC\ to be more striking, since they have $V$ as an
inner model, and for example also satisfy Hartogs' theorem that for
every ordinal $\alpha$, the cardinal $\aleph_\alpha$ exists, and
many other attractive properties.

\subsection{Pumping up the continuum}\label{Section.PumpingUpTheContinuum}
In the next example, we shall force to pump up the continuum instead
of forcing to collapse cardinals as in the previous example. Let us
start in a model $V$ of $\ZFC$ in which $2^\omega\lt\aleph_\omega$,
and let $G\of\Add(\omega,\aleph_\omega)$ be $V$-generic for the
forcing to add $\aleph_\omega$-many Cohen reals.  Let
$W=\Union_{n<\omega}V[G_n]$, where
$G_n=G\intersect\Add(\omega,\aleph_n)$ is the initial segment of the
forcing adding only $\aleph_n$-many Cohen reals. We argue that
$W\satisfies\ZFCmm$. Once again, most of the axioms are easy to
verify, since $W$ is the union of an increasing chain of transitive \ZFC\ models
$V[G_n]$. It is non-trivial to verify only replacement, which we
handle by a similar argument as above. Namely, suppose that $A,z\in
W$ and $W\satisfies\forall a{\in}A\,\exists!b\,\varphi(a,b,z)$. As
before, we assume that $A,z\in V$, since otherwise they are in some
$V[G_k]$, which we may regard as the new ground model. For each
$a\in A$, we claim again that the witness $b$ for which
$\varphi^W(a,b,z)$ must be in $V$. In any case $b\in V[G_n]$ for
some $n$, and so there is some $\P_n$-name $\dot b$ with $b=\dot
b_G$ and a condition $p\in G$ with $p\forces\varphi^{\dot W}(\check
a,\dot b,\check z)$. We may view the forcing to add $G$ as
consisting of first adding $\aleph_n$-many Cohen reals, and then
adding $\aleph_n$-many more Cohen reals, and then adding the rest of
them, since this is an isomorphic presentation of the forcing. By
using the automorphism that swaps these two mutually generic blocks
of $\aleph_n$-many Cohen reals, we may rearrange the filter $G$ to
construct another filter $\Gbar$ for which $V[G]=V[\Gbar]$, but
$V[G_n]$ and $V[\Gbar_n]$ are mutually generic extensions by
$\Add(\omega,\aleph_n)$, while still having $\dot W_G=\dot
W_{\Gbar}$. As before, we may also apply an additional automorphism
to the second copy of $\Add(\omega,\aleph_n)$ if necessary when
constructing $\Gbar$ and assume without loss of generality that
$p\in\Gbar$. It follows that $W\satisfies\varphi(a,\dot
b_{\Gbar},z)$, and consequently, by the uniqueness of $b$, that
$b=\dot b_G=\dot b_\Gbar$. Thus, $b$ lies in both $V[G_n]$ and
$V[\Gbar_n]$, but the intersection of these models is $V$ by mutual
genericity, and so $b\in V$, as we claimed. To complete the
argument, observe that since $\P$ is weakly homogeneous by
automorphisms not affecting $\dot W$, we have that for any given
$a\in A$ the question of whether a given $b\in V$ has
$\varphi^W(a,b,z)$ must be decided by $\one$. So we may in $V$ build
the set $\set{b\st \exists a{\in}A\, \varphi^W(a,b,z)}$ by using
replacement in $V$. So this set exists in $W$, and we have verified
the replacement scheme in $W$.

\begin{theorem}\label{Theorem.SetsOfRealsAleph_n}
It is consistent with $\ZFCmm$ that
$\aleph_\omega$ exists and for each $n$ there are sets of
reals of size $\aleph_n$, but no set of reals of size
$\aleph_\omega$. In particular, there is no set of reals of largest
cardinality, and this violates the collection
scheme.
\end{theorem}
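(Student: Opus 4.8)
The plan is to reuse the model $W=\Union_{n<\omega}V[G_n]$ from the ``Pumping up the continuum'' subsection, where $V\satisfies 2^\omega<\aleph_\omega$, the filter $G\of\Add(\omega,\aleph_\omega)$ is $V$-generic, and $G_n=G\intersect\Add(\omega,\aleph_n)$. We have already verified there that $W\satisfies\ZFCmm$, so it remains to check three facts about $W$: (i) $\aleph_\omega$ exists in $W$; (ii) for each $n$ there is a set of reals of size $\aleph_n$ in $W$; and (iii) no set of reals in $W$ has size $\aleph_\omega$ or larger. The absence of a set of reals of largest cardinality and the failure of collection then follow.

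Items (i) and (ii) are quick. Since $\Add(\omega,\aleph_\omega)$ is ccc, every $V$-cardinal remains a cardinal in $V[G]$ and hence in the inner model $W$; and if an ordinal fails to be a cardinal in $V$, the witnessing surjection lies in $V\of W$, so it is not a cardinal in $W$ either. Thus $W$ and $V$ have the same cardinals, so $\aleph_\omega$ exists in $W$ (as does $\aleph_\alpha$ for every ordinal $\alpha$). Also, the set $\set{c_\alpha\st\alpha<\aleph_n}$ of the first $\aleph_n$ Cohen reals added by $G_n$ lies in $V[G_n]\of W$ and consists there of $\aleph_n$-many distinct subsets of $\omega$; since $\aleph_n$ is preserved, it still has size $\aleph_n$ in $W$.

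The main obstacle is (iii). Let $X\in W$ be a set of reals, and fix $n$ with $X\in V[G_n]$. A nice-name count bounds the continuum of $V[G_n]$: every subset of $\omega$ in $V[G_n]$ has a nice $\Add(\omega,\aleph_n)$-name, coded by an $\omega$-sequence of countable antichains in a poset of size $\aleph_n$, so $(2^\omega)^{V[G_n]}\leq(\aleph_n^{\aleph_0})^V$; and by Hausdorff's recursion this equals $\max\big((2^{\aleph_0})^V,\aleph_n\big)$, which lies below $\aleph_\omega$ because $V\satisfies 2^\omega<\aleph_\omega$. Hence $|X|^{V[G_n]}\leq\aleph_m$ for some $m<\omega$, and as $\aleph_m$ is preserved, $|X|^W\leq\aleph_m<\aleph_\omega$. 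So $W$ has no set of reals of size $\aleph_\omega$ or larger; this bound on the continuum at each finite stage is the only nonroutine ingredient.

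Finally, by (ii) and (iii), every set of reals in $W$ has size $\leq\aleph_m$ for some $m$ while there is a set of reals of size $\aleph_{m+1}$, so there is no set of reals of largest cardinality. For the collection failure, apply the collection scheme in $W$ with domain $A=\set{\aleph_n\st n<\omega}$ --- a set of $W$, being the range of $\<\aleph_n^V\st n<\omega>\in V\of W$ since $W$ and $V$ have the same cardinals --- and with $\psi(\kappa,b)$ the assertion ``$b$ is a set of subsets of $\omega$ with $|b|\geq\kappa$''. By (ii) we have $W\satisfies\forall\kappa{\in}A\,\exists b\,\psi(\kappa,b)$, so if collection held there would be a set $B\in W$ with $W\satisfies\forall\kappa{\in}A\,\exists b{\in}B\,\psi(\kappa,b)$; but then $C=\Union\set{b\in B\st b\text{ is a set of subsets of }\omega}$ is a set of reals in $W$ of size $\geq\aleph_n$ for every $n$, hence of size $\geq\aleph_\omega$, contradicting (iii).
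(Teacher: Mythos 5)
Your proposal is correct and follows essentially the same route as the paper: the same model $W=\Union_n V[G_n]$ from the Cohen-real construction, ccc preservation of cardinals for the existence of $\aleph_\omega$ and of sets of reals of each size $\aleph_n$, and a nice-name count bounding $(2^\omega)^{V[G_n]}$ below $\aleph_\omega$ to rule out a set of reals of size $\aleph_\omega$. You merely spell out two steps the paper leaves implicit (the Hausdorff computation $\aleph_n^{\aleph_0}=\max(2^{\aleph_0},\aleph_n)$ and the explicit derivation of the collection failure by unioning a collecting set), both of which are fine.
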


\begin{proof}
Consider the model $W$ we just constructed and recall
that we assumed $2^\omega<\aleph_\omega$ in the ground model $V$.
Since the forcing $\Add(\omega,\aleph_\omega)$ has the countable
chain condition, it preserves all cardinals to $V[G]$ and hence also
to $W$.  The reals of
$V[G_n]$ exist in $W$ and have size at least $\aleph_n$ there; hence
$W$ has sets of reals of size exactly $\aleph_n$ by Zermelo's
well-ordering principle. But every set of reals in $W$ is in
$V[G_m]$ for some $m<\omega$ and hence must have size less than
$\aleph_\omega$.
\end{proof}

As in the previous section, the construction here is also quite general, and it adapts easily in order to produce models $W$ of $\ZFCmm$ that violate collection but are highly closed in the overall forcing extension $V[G]$. For example, if $\kappa$ is any regular cardinal with $2^\omega < \aleph_\kappa$  and $G\of \Add(\omega,\aleph_\kappa)$ is $V$-generic for the forcing to add $\aleph_\kappa$-many Cohen reals, then let $W=\Union_{\gamma < \kappa} V[G_\gamma]$, where $G_\gamma=G\intersect \Add(\omega, \aleph_\gamma)$. It follows essentially by the same arguments as before that $W$ satisfies $\ZFCmm$ but not collection, and that $W$ has sets of reals of size $\aleph_\gamma$ for each $\gamma<\kappa$, but no set of reals of size $\aleph_\kappa$.
Moreover, the model $W$ is closed under sequences of length less than $\kappa$ in $V[G]$. Other generalizations are possible also, and we could have instead added subsets to $\omega_1$ or $\omega_2$ or to other regular cardinals, and made similar conclusions.

Let us now consider a natural strengthening of both collection and replacement. The {\df reflection principle} scheme is the assertion of any formula $\varphi$, that for any given set $x$ there is a transitive set $A$ containing $x$ as a subset such that $\varphi$ is absolute between $A$ and the universe. The reflection principle scheme is provable in \ZFC\ by appealing to the von Neumann $V_\alpha$ hierarchy, which does not exist when the power set axiom fails. The reflection principle scheme implies the collection axiom scheme directly, and in the presence of separation, it also implies the replacement axiom scheme. Failure of collection therefore implies the failure of the reflection principle scheme, as in the previously constructed models $W$ of $\ZFCmm$.  It is an interesting open question whether the reflection principle scheme is provable in $\ZFCm$. We suspect, just as Zarach does in
\cite{Zarach1996:ReplacmentDoesNotImplyCollection}, that it is not.

The reflection principle scheme is, over $\ZFCm$, equivalent to a version of the
axiom of choice that we call the \emph{dependent choice scheme}, which is the natural class version of Tarski's principle of dependent choices for
definable relations. Specifically, the $\DC$-scheme asserts of any
formula $\varphi$, that for any parameter $z$, if for every $x$
there is $y$ with $\varphi(x,y,z)$, then there is an
$\omega$-sequence $\<x_n\st n<\omega>$ such that $\forall
n\,\varphi(x_n,x_{n+1},z)$. In other words, if $\varphi$ defines a
relation having no terminal nodes, then we can make $\omega$-many dependent choices to find an $\omega$-path through
this relation.  The reflection principle scheme implies the $\DC$-scheme, since it reduces instances of the $\DC$-scheme to set instances of $\DC$, which then follow from \AC. Conversely,
to obtain reflection for a particular formula $\varphi$ from the \DC-scheme,  we first use collection to see that any given set $x$ can be extended to a transitive set containing existential witnesses for all subformulas of $\varphi$ with parameters from $x$, and then we use the \DC-scheme to chose an $\omega$-path of such extensions.
The next theorem shows that the use of collection when proving this converse direction is essential.

\begin{theorem}[Zarach]\label{theorem.DCDoesNotImplyReflection}
It is consistent with $\ZFCmm$ that the $\DC$-scheme holds, but the reflection principle scheme fails.
 \end{theorem}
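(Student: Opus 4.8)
The plan is to revisit the general $\kappa$-version of the L\'evy collapse construction from \S\ref{Section.LevyCollapseAleph_omega}, taking $\kappa=\omega_1$ so that the resulting inner model $W$ is closed under $\omega$-sequences in the forcing extension $V[G]$, and then to leverage that closure, together with the definability of $W$ in $V[G]$, to push the $\DC$-scheme up from $V[G]$ to $W$. Concretely, I would work over a model $V$ of $\GCH$, so that $\aleph_\beta^{<\omega_1}<\aleph_{\omega_1}$ for every $\beta<\omega_1$, let $G\of\Coll(\omega_1,{\lt}\aleph_{\omega_1})$ be $V$-generic, and set $W=\Union_{\gamma<\omega_1}V[G_\gamma]$ with $G_\gamma=G\intersect\P_\gamma$ and $\P_\gamma=\Pi_{\beta\leq\gamma}\Coll(\omega_1,\aleph_\beta)$. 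By the arguments of \S\ref{Section.LevyCollapseAleph_omega}, $W\satisfies\ZFCmm$, and collection fails in $W$ by the same reasoning as in Theorem~\ref{Theorem.Collection}: for each $\gamma<\omega_1$ the model $W$ contains a surjection of $\omega_1$ onto $\aleph_\gamma^V$, yet no single set of $W$ collects such surjections, since every set of $W$ belongs to some $V[G_\delta]$. Since separation holds in $W$ and any instance of the reflection principle scheme directly yields the corresponding instance of collection, the reflection principle scheme fails in $W$.

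It remains to verify the $\DC$-scheme in $W$. The two facts I would use are that $W$ is a definable class in $V[G]$ (using the ground-model parameter and $G$, as in \S\ref{Section.LevyCollapseAleph_omega}), and that $W$ is closed under $\omega$-sequences in $V[G]$; the latter holds because the tail of the collapse beyond any stage $\gamma<\omega_1$ is ${<}\omega_1$-closed over $V[G_\gamma]$ and so adds no new $\omega$-sequences of elements of $V[G_\gamma]$, while any $\omega$-sequence of elements of $W$ has, by the regularity of $\omega_1$, its range contained in a single $V[G_\gamma]$. So suppose $z\in W$ and $W$ satisfies $\forall x\,\exists y\,\varphi(x,y,z)$. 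Relativizing to $W$ inside $V[G]$, the class relation $R(x,y)\iff x,y\in W\wedge\varphi^W(x,y,z)$ is definable in $V[G]$, has field exactly $W$, and has no terminal nodes. Since $V[G]\satisfies\ZFC$ and $\ZFC$ proves the $\DC$-scheme for definable class relations---one collects existential witnesses of least rank into sets by replacement, forms a set tree of finite approximations with no terminal nodes, and applies the set form of dependent choice---there is in $V[G]$ an infinite $R$-path $\<x_n\st n<\omega>$, and every $x_n$ lies in $W$. By closure of $W$ under $\omega$-sequences this path is a member of $W$, and since $\varphi^W(x_n,x_{n+1},z)$ holds for every $n$, we conclude that $W$ satisfies $\forall n\,\varphi(x_n,x_{n+1},z)$. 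As $\varphi$ and $z$ were arbitrary, $W$ satisfies the $\DC$-scheme.

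The heart of the matter is the transfer step: a dependent-choice sequence built in $V[G]$ must end up inside $W$, and this is exactly where the ${<}\omega_1$-closure of $W$ in $V[G]$ is used---without it the argument collapses. I expect the only other point requiring care is the auxiliary fact that $\ZFC$ itself proves the $\DC$-scheme for definable class relations, which rests on the familiar maneuver of first gathering enough existential witnesses into a set before invoking choice---precisely the maneuver that is unavailable inside $\ZFCmm$ and whose absence is what makes the theorem true. The remaining items are routine bookkeeping for the $\kappa=\omega_1$ instance: that $\omega_1$ is preserved, so that ``$\omega_1$'' means the same in $V$, $V[G]$, and $W$; that each $\P_\gamma$ is small enough to leave some $V$-cardinal above $\aleph_\gamma^V$ uncollapsed, so that collection genuinely fails; and that $\GCH$ secures the arithmetic hypothesis $\aleph_\beta^{<\omega_1}<\aleph_{\omega_1}$, this last assumption being avoidable by instead carrying out the construction over a sufficiently large $H_{\theta^\plus}$.
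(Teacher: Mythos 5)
Your proof is correct and follows essentially the same strategy as the paper's: build a definable inner model $W\satisfies\ZFCmm$ of a \ZFC\ forcing extension that violates collection but is closed under $\omega$-sequences, transfer the $\DC$-scheme down from $V[G]$ using definability plus $\omega$-closure, and note that reflection implies collection. The only difference is the choice of forcing---the paper instantiates the Cohen-real construction $\Add(\omega,\aleph_{\omega_1})$ from \S\ref{Section.PumpingUpTheContinuum} (with $2^\omega<\aleph_{\omega_1}$), while you instantiate the L\'evy collapse $\Coll(\omega_1,{\lt}\aleph_{\omega_1})$ from the general remark at the end of \S\ref{Section.LevyCollapseAleph_omega}---and both are explicitly covered by the paper's general framework.
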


\begin{proof} Suppose that $2^\omega<\aleph_{\omega_1}$ and that $G\of\Add(\omega,\aleph_{\omega_1})$ is the forcing to add $\aleph_{\omega_1}$-many Cohen reals. If $W=\Union_{\gamma<\omega_1}V[G_\gamma]$ where $G_\gamma=G\intersect \Add(\omega,\aleph_\gamma)$, then as we discussed earlier in this section, $W$ satisfies $\ZFCmm$ but not collection and is closed under countable sequences in $V[G]$. The
$\DC$-scheme holds in $W$, since it holds in $V[G]$ and $W$ is a definable class with $W^\omega\of W$ in $V[G]$. However, the reflection principle scheme fails in~$W$, since collection does.
\end{proof}
For any infinite cardinal $\kappa$, \Levy\ \cite{levy:DCkappa} introduced the principle $\DC_\kappa$, which is the assertion that for any set $A$ and any binary set relation $R$, if for each sequence $\vec s\in A^{\ltkappa}$ there is a $y\in A$ such that $\vec s$ is $R$-related to $y$, then there is a $\kappa$-sequence $\<x_\xi \st \xi<\kappa>$ such that for each $\alpha<\kappa$ the initial sequence $\<x_\xi \st \xi<\alpha>$ is $R$-related to  $x_\alpha$. This natural generalization of Tarski's principle of dependent choices allows for $\kappa$-many dependent choices, rather than just $\omega$-many, and it is easy to see that $\DC_\omega$ is equivalent to the usual principle of dependent choices. Jech showed in \cite{jech:DCkappa} that it is relatively consistent with $\ZF$ that $\DC_\alpha$ holds for all $\alpha$ below any given regular $\kappa$ but $\DC_\kappa$ fails.

In our context, namely in the theory $\ZFCmm$ where every set can be well-ordered,  we consider the natural class version of $\DC_\kappa$, namely the principle that we call the \emph{$\DC_\kappa$-scheme}, which asserts of any formula $\varphi$ that for any parameter $z$, if for every $x$ there is $y$ with $\varphi(x,y,z)$, then there is a function $f$ with domain $\kappa$ such that $\forall \xi{<}\kappa\,\varphi(f{\restrict} \xi,f(\xi),z)$.
 It is easy to see that the $\DC_\omega$-scheme is equivalent over $\ZFCmm$ to the $\DC$-scheme. The reflection principle scheme implies the $\DC_\kappa$-scheme for all cardinals $\kappa$, since it reduces it to set instances of $\DC_\kappa$, which then follow from \AC . But, as the next theorem shows, under $\ZFCmm$ the $\DC_\kappa$-scheme is not strong enough to prove the reflection principle scheme, or $\DC_{\lambda}$ if $\lambda>\kappa$.

\begin{theorem}\label{theorem.DC_kappaDoesNotImplyReflection}
Suppose that $\kappa$ is any regular cardinal with $2^\omega<\aleph_\kappa$ and that $G\of\Add(\omega,\aleph_\kappa)$ is $V$-generic. If $W=\Union_{\gamma<\kappa}V[G_\gamma]$ where $G_\gamma=G\intersect \Add(\omega,\aleph_\gamma)$, then $W\satisfies\ZFCmm$ has the same cardinals as $V$ and the $\DC_\alpha$-scheme holds in $W$ for all $\alpha<\kappa$, but the $\DC_\kappa$-scheme and the reflection principle scheme fail. In particular, it is consistent with $\ZFCmm$ that the $\DC$-scheme fails.
 \end{theorem}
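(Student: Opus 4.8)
The plan is to take for $W$ precisely the model analyzed in the discussion following Theorem~\ref{Theorem.SetsOfRealsAleph_n}, applied at this particular $\kappa$. From that discussion we already have everything except the two assertions about $\DC$: namely $W\satisfies\ZFCmm$, $W$ has the same cardinals as $V$ (since $\Add(\omega,\aleph_\kappa)$ is ccc), $W$ is a definable class in $V[G]$ that is closed under sequences of length less than $\kappa$ there, $W$ violates collection, and $W$ has a set of reals of size $\aleph_\gamma$ for every $\gamma<\kappa$ but no set of reals of size $\aleph_\kappa$. The failure of the reflection principle scheme in $W$ is then immediate, since reflection implies collection but collection fails in $W$; and the last sentence of the theorem is the case $\kappa=\omega$ of the assertion about the $\DC_\kappa$-scheme. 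So two things remain to be proved: that the $\DC_\kappa$-scheme fails in $W$, and that the $\DC_\alpha$-scheme holds in $W$ for every $\alpha<\kappa$.

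For the failure of the $\DC_\kappa$-scheme I would build the obstruction into the defining relation. Using $\kappa$ as a parameter, let $\varphi(s,y)$ assert: either $s$ is \emph{not} a $\of$-increasing function with ordinal domain all of whose values are infinite sets of reals satisfying $\Card{s(\beta)}\geq\aleph_\beta$ for $\beta\in\dom s$; or else $s$ is such a function, $\dom s=\alpha$, and $y$ is a set of reals with $\Union\ran(s)\of y$ and $\Card y\geq\aleph_\alpha$. Then $W\satisfies\forall x\,\exists y\,\varphi(x,y)$: the only case needing a witness is when $s$ is a function of the stated kind, and then $\Union\ran(s)$ is a set of reals of $W$, hence of size $<\aleph_\kappa$; since $\Union\ran(s)$ includes each $s(\beta)$, this forces $\dom s=\alpha<\kappa$, so there is a set of reals $R$ of size $\aleph_\alpha$ in $W$ and $y=\Union\ran(s)\cup R$ is a witness. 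Now suppose toward a contradiction that the $\DC_\kappa$-scheme holds in $W$, giving $f\colon\kappa\to W$ in $W$ with $\varphi(f\restrict\xi,f(\xi))$ for every $\xi<\kappa$. An induction on $\xi\leq\kappa$ shows that each $f\restrict\xi$ is a $\of$-increasing function of infinite sets of reals with $\Card{f(\beta)}\geq\aleph_\beta$: at successor steps the first disjunct of $\varphi$ is false, so the second holds; limit steps are clear. Hence $\Union\ran(f)$, which lies in $W$ by replacement and the union axiom there, is a set of reals of $W$ of size $\geq\aleph_\xi$ for every $\xi<\kappa$, and therefore of size $\geq\aleph_\kappa$ in $W$, since $W$ has the same cardinals as $V$. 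This contradicts that $W$ has no set of reals of size $\aleph_\kappa$.

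For the $\DC_\alpha$-scheme with $\alpha<\kappa$, I would transfer a given instance up to $V[G]$. Suppose $\varphi$ and a parameter $z\in W$ satisfy $W\satisfies\forall x\,\exists y\,\varphi(x,y,z)$. Working in $V[G]$, where $W$ is definable, let $\psi(s,y)$ assert: either $s$ is not a function with ordinal domain $\leq\alpha$ whose values all lie in $W$; or else $s$ is such a function --- and hence $s\in W$, by closure of $W$ under sequences of length less than $\kappa$ --- and $y\in W$ with $\varphi^W(s,y,z)$. Then $V[G]\satisfies\forall x\,\exists y\,\psi(x,y)$, and since $V[G]\satisfies\ZFC$ it satisfies the $\DC_\alpha$-scheme (provably in $\ZFC$ via reflection to a suitable $V_\theta$ together with $\AC$), producing $f\colon\alpha\to V[G]$ with $\psi(f\restrict\xi,f(\xi))$ for all $\xi<\alpha$. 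An induction on $\xi\leq\alpha$ gives $\ran(f\restrict\xi)\of W$ --- using at successor steps that the second clause of $\psi$ places $f(\xi)$ into $W$ --- so $f\in W$ by closure again, and the second clause of $\psi$ then gives $W\satisfies\varphi(f\restrict\xi,f(\xi),z)$ for all $\xi<\alpha$. Thus $f$ witnesses this instance of the $\DC_\alpha$-scheme in $W$.

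I expect the main difficulty to be the bookkeeping in this last step: the auxiliary relation $\psi$ must be arranged so that no path through it can escape $W$ and so that $W$ genuinely recognizes the restrictions $f\restrict\xi$ as admissible inputs to $\varphi$ --- this is exactly the role of the definability of $W$ together with its closure under sequences of length less than $\kappa$ --- and one must keep careful track of which model each quantifier ranges over. By contrast the $\DC_\kappa$-failure is robust, since the relation $\varphi$ there is rigged so that every $\kappa$-path is forced to manufacture the forbidden set of reals of size $\aleph_\kappa$.
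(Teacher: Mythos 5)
Your proposal is correct and takes essentially the same route as the paper: the same model $W$ from the pumping-up-the-continuum construction, failure of reflection via failure of collection, the $\DC_\alpha$-scheme for $\alpha<\kappa$ transferred down from $V[G]$ using the definability and ${<}\kappa$-closure of $W$, and a defeating relation whose $\kappa$-paths would manufacture a set of reals of size $\aleph_\kappa$. The only differences are cosmetic: the paper's relation is the simpler $\varphi(x,y)$ asserting that $y$ is an infinite set of reals with $|\Union x|<|y|$ whenever $x$ is a sequence of sets of reals, and the paper leaves the $\DC_\alpha$ transfer implicit where you spell it out.
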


\begin{proof} As we discussed earlier in this section, $W$ is a model of $\ZFCmm$ but not collection, it has sets of reals of size $\aleph_\gamma$ for each $\gamma<\kappa$, but no set of reals of size $\aleph_\kappa$, and $W^{\ltkappa}\of W$. Using the chain condition, $W$ and $V$ have the same cardinals. Thus, for each $\alpha<\kappa$, the $\DC_\alpha$-scheme holds in $W$, but the reflection principle scheme fails. To see that $W$ does not satisfy the $\DC_\kappa$-scheme, let
$\varphi(x,y)$ assert that $y$ is an infinite set of reals,
and if $x$ is a sequence of sets of reals, then $|\Union x|<|y|$. Because
there are increasingly large sets of reals in $W$, it
follows that for  each sequence $\vec s \in W^{\ltkappa}$ of sets of reals there is a $y\in W$ such that $\varphi(\vec s,y)$ holds in $W$. But there
is no $\kappa$-path through this relation in $W$, because
the union of any such sequence would be a set of reals in
$W$ of size at least $\aleph_\kappa$, a contradiction. Lastly note that if $\kappa=\omega$, the \DC-scheme fails.
\end{proof}

Zarach showed already in \cite{Zarach1996:ReplacmentDoesNotImplyCollection} that there is a model of $\ZFCmm$ in which the \DC-scheme fails. He also showed that over $\ZFCmm$, the scheme $\forall \alpha\;\DC_\alpha$ implies the collection scheme and therefore, if collection fails then there must be a cardinal $\kappa$ such that $\DC_\kappa$ fails as well.

\subsection{The \Levy\ collapse of an inaccessible
cardinal}\label{Section.LevyCollapseInaccessible} Suppose now that
$\kappa$ is an inaccessible cardinal and that $G$ is $V$-generic for
the \Levy\ collapse of $\kappa$, meaning the finite-support product
$\Coll(\omega,\ltkappa)=\prod_{\beta\lt\kappa}\Coll(\omega,\aleph_\beta)$.
Let $W=\Union_{\gamma<\kappa}V[G_\gamma]$, where
$G_\gamma=G\intersect\P_\gamma$, where
$\P_\gamma=\prod_{\beta\leq\gamma}\Coll(\omega,\aleph_\beta)$ is the forcing
up to $\aleph_\gamma$. Note that $W^\omega\of W$ in $V[G]$. The argument that $W\satisfies\ZFCmm$ is identical to those previously given and relies on the fact that $\Coll(\omega,\aleph_{\gamma+1})$ may be viewed via forcing equivalence as first performing another copy of $\P_\gamma=\prod_{\beta\leq\gamma}\Coll(\omega,\aleph_\beta)$, and then performing the rest of the collapse.

The model $W$ possesses another interesting feature. Note that the filter $G_\gamma$ is
coded by a real in $V[G]$, and every real of
$V[G]$ appears in some $V[G_\gamma]$. So an equivalent
description of $W$ in $V[G]$ is as the union
$W=\Union_{r\in\R}V[r]$ (where $V[r]$ denotes the closure of $V$ and $r$ under the \Godel\ operations).
\begin{theorem}\label{Theorem.EverySetRealsCountableOmega1Regular}
Relative to an inaccessible cardinal, it is consistent with $\ZFCmm$ that $\omega_1$
exists and is regular, but every set of reals is countable, which implies that the collection scheme fails.
\end{theorem}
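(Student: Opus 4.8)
The plan is to use the model $W=\Union_{\gamma<\kappa}V[G_\gamma]$ just constructed from the \Levy\ collapse of the inaccessible cardinal $\kappa$, which we have already verified satisfies $\ZFCmm$; the argument runs parallel to the proof of Theorem~\ref{Theorem.SetsOfRealsCountableYetomega1}, with the inaccessibility of $\kappa$ doing the work of the cardinal-arithmetic hypothesis used there and one extra observation giving regularity of $\omega_1$. First I would pin down $\omega_1^W$. Since each $\P_\gamma$ is ccc and $\kappa$ is regular, every real of $V[G]$ has a nice name concentrated on countably many antichains and hence lies in some $V[G_\gamma]$, so $\R^W=\R^{V[G]}$. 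As $\Coll(\omega,\ltkappa)$ forces $\kappa=\omega_1$ and $W\of V[G]$, the cardinal $\kappa$ remains a cardinal in $W$; meanwhile every ordinal $\delta<\kappa$ satisfies $\delta<\aleph_\beta^V$ for some $\beta<\kappa$ and is therefore already collapsed to be countable in $V[G_\beta]\of W$. Hence $\omega_1^W=\kappa$. For regularity: a cofinal map $f\colon\alpha\to\kappa$ with $\alpha<\kappa$ lying in $W$ would lie in some $V[G_\gamma]$, but $|\P_\gamma|<\kappa$ and $\kappa$ is inaccessible in $V$, so $\kappa$ stays regular in $V[G_\gamma]$, a contradiction; thus $W\satisfies$ ``$\omega_1$ is regular.''

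Next, every set of reals in $W$ is countable there: such a set $A$ lies in some $V[G_\gamma]$, and then every member of $A$ is a real of $V[G_\gamma]$, so $A\of\R^{V[G_\gamma]}$. A nice-name count, using that $\P_\gamma$ is ccc with $|\P_\gamma|<\kappa$ and that $\kappa$ is a strong limit, shows $|\R^{V[G_\gamma]}|^{V[G_\gamma]}=\lambda$ for some ordinal $\lambda<\kappa$; since $\lambda<\kappa=\omega_1^W$, the set $\R^{V[G_\gamma]}$ is countable in $W$, and hence so is $A$. The only bookkeeping needing care here is that a ``set of reals'' in the sense of $W$ which happens to be an element of the transitive model $V[G_\gamma]$ genuinely has all of its members in $V[G_\gamma]$, and that the bijection witnessing $|\R^{V[G_\gamma]}|=\lambda$, together with a collapse of $\lambda$ to $\omega$ appearing at a bounded stage, both survive into $W$. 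I expect this counting argument, and tracking which sets live in which $V[G_\gamma]$, to be the only point requiring real attention; everything else adapts routinely from the earlier $\aleph_\omega$-collapse arguments.

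Finally, collection fails in $W$: for every countable ordinal $\alpha$ there is a real coding a well-order of $\omega$ of type $\alpha$, so $W\satisfies\forall a{\in}\omega_1\,\exists b\,\varphi(a,b)$, where $\varphi(a,b)$ asserts that $b\of\omega\times\omega$ is a well-order of $\omega$ of order type $a$. If collection held in $W$, there would be a set $B$ with $W\satisfies\forall a{\in}\omega_1\,\exists b{\in}B\,\varphi(a,b)$; shrinking $B$ by separation to the set of such codes makes it a set of reals, hence countable in $W$ by the previous paragraph, so using the well-ordering of $B$ furnished by choice in $W$ we obtain an injection of $\omega_1^W$ into $B$ via $\alpha\mapsto$ the least code of order type $\alpha$, contradicting that $\omega_1^W$ is uncountable. (Equivalently, one applies replacement in $W$ to realize $\{\ot(b)\st b\in B\}\fo\omega_1^W$ as a surjective image of $\omega$.) This completes the proof, modulo the consistency-strength remark already discussed, that forcing over a large $H_{\theta^\plus}$ gives an outright transitive model from the hypothesis that an inaccessible cardinal is consistent.
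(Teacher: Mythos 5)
Your proof is correct and follows essentially the same route as the paper's, which likewise reads off $\omega_1^W=\kappa$ regular, observes that every set of reals lands in some $V[G_\gamma]$ and is collapsed to be countable at a later stage, and derives the failure of collection from the impossibility of collecting the collapsing functions (equivalently, your well-order codes) into a single, necessarily uncountable, set of reals. The only slip is calling the posets ccc: $\Coll(\omega,\aleph_\beta)$ has antichains of size $\aleph_\beta$, so the bounding of nice names at a stage $\gamma<\kappa$ and the count $|\R^{V[G_\gamma]}|<\kappa$ should instead invoke the $\kappa$-c.c.\ of $\Coll(\omega,\ltkappa)$ together with $|\P_\gamma|<\kappa$ and the strong-limit property of the inaccessible $\kappa$ --- with that substitution every step goes through.
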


\begin{proof}
Consider the model $W$ as constructed just
previously. The cardinal $\kappa$ becomes $\omega_1$ in
$V[G]$ and hence also in $W$, and remains regular there.
Every set of reals in $W$ is in $V[G_\gamma]$ for some
$\gamma\lt\kappa$, and becomes countable at a later stage
and hence countable in $W$. This implies that the
collection scheme fails in $W$, because for each countable
ordinal $\alpha$, there is a function
$f:\omega\cong\alpha$, but there is no set containing such
functions for every $\alpha$, since from such a set we
could construct in $W$ an uncountable set of reals.
\end{proof}

To see the failure of collection in the proof of theorem~\ref{Theorem.EverySetRealsCountableOmega1Regular}, it was crucial that every set of reals was countable. Indeed, one of Zarach's intriguing results on set theory without power set shows that it is relatively consistent with $\ZFCm$ that Hartogs' theorem holds, but there are unboundedly many cardinals whose powersets are proper classes with all subsets of a certain bounded size. For example, he provides in~\cite{zarach:unions_of_zfminus_models} a model of $\ZFCm$ for which $\aleph_\alpha$ exists for each ordinal $\alpha$, where $P(\omega)$ is a proper class, but every set of reals has size at most $\omega_1$.

Note also that if $W\satisfies\ZFCmm$ is a model of Hartogs' theorem
 in which $\omega_1$ exists and is regular and every set of reals is countable, then $\omega_1$ is inaccessible in $L^W\models \ZFC$, and indeed, is inaccessible to reals, for
otherwise we would find an uncountable set of reals in some
$L[x]^W$, which would remain uncountable in $W$. So the use of the
inaccessible cardinal is necessary for any construction that obtains
$W$ as above.

\subsection{The \Levy\ collapse of a measurable
cardinal}\label{Section.LevyCollapseMeasurable} Let us now
turn to a version of the construction providing a violation
of the \Los\ theorem. Namely, in theorem
\ref{Theorem.LevyCollapseMeasurable} we show that in the
\Levy\ collapse $V[G]$ of a measurable cardinal $\kappa$,
the inner model $W$ as constructed in
section \S\ref{Section.LevyCollapseInaccessible} has a
definable ultrafilter $\mu^*$ on $\kappa=\omega_1^{V[G]}$,
whose ultrapower is well-founded, but the ultrapower map is
not elementary. Thus, it is relatively consistent with
$\ZFCmm$ that this version of the \Los\ theorem fails for
ultrapowers, where the ultrafilter is fully amenable to the
model and indeed definable over the model and the
ultrapower is well-founded. Furthermore, we show that the
Gaifman theorem fails for this ultrapower embedding, since
it is $\Sigma_1$-elementary and cofinal, but not fully
elementary.

\begin{theorem}\label{Theorem.LevyCollapseMeasurable}
If\/ $V[G]$ is the forcing extension by the \Levy\ collapse
$G\of\Coll(\omega,\ltkappa)$ of a measurable cardinal
$\kappa$ and $W=\Union_{\gamma<\kappa}V[G_\gamma]$, where
$G_\gamma=G\intersect\P_\gamma$ and
\hbox{$\P_\gamma=\prod_{\beta\leq\gamma}\Coll(\omega,\aleph_\beta)$}, then:
 \begin{enumerate}
  \item $W\satisfies\ZFCmm$.
  \item $W^{\ltkappa}\of W$ in $V[G]$.
   \item In $W$ there is a definable $W$-normal measure
      $\mu^*$ on $\kappa$.
   \item The ultrapower $\Mbar\cong\Ult(W,\mu^*)$ taken in
      $V[G]$ using functions on $\kappa$ in $W$ is
      well-founded and $\Mbar\satisfies\ZFCmm$.
  \item The class $\Mbar$ and the ultrapower map are definable in $W$.
  \item The \Los\ theorem fails for this ultrapower at the $\Sigma_1$-level.
  \item The Gaifman theorem fails for the ultrapower
      map $j:W\to \Mbar$, since it is
      $\Sigma_1$-elementary and cofinal, but not
      $\Sigma_2$-elementary.
 \end{enumerate}
\end{theorem}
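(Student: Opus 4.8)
The plan is to verify the seven clauses in order; everything apart from~(3), (6) and~(7) is routine given the earlier constructions. Clause~(1), that $W\satisfies\ZFCmm$, is proved word for word as in \S\ref{Section.LevyCollapseInaccessible}: all axioms but replacement hold because $W$ is an increasing union of transitive \ZFC\ models, and replacement follows from the almost-homogeneity of $\Coll(\omega,\ltkappa)$ together with the forcing equivalence of $\Coll(\omega,\aleph_{\gamma+1})$ with two successive copies of $\P_\gamma$, which pushes a witness into $V[G_\gamma]\intersect V[\Gbar_\gamma]=V$. For~(2), since $\kappa$ is inaccessible the forcing is $\kappa$-c.c., so $\kappa$ stays regular in $V[G]$, in fact $\kappa=\omega_1^{V[G]}$; given $\vec s\in W^{\ltkappa}$ in $V[G]$ each coordinate lies in some $V[G_{\gamma_\xi}]$, the supremum of the $\gamma_\xi$ is below $\kappa$, and a nice-name argument using the $\kappa$-c.c.\ then puts a name for $\vec s$ into some $\P_\delta$, so $\vec s\in V[G_\delta]\of W$.

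For~(3), fix a normal measure $\mu$ on $\kappa$ in $V$; as $\mu\in V\of W$ it is available as a parameter over $W$. Put $\mu^*=\set{X\in W\st X\of\kappa\ \hbox{and}\ \exists Y\in\mu\,(Y\of X)}$, plainly definable over $W$. Every $X\of\kappa$ in $W$ lies in some $V[G_\gamma]$, and since $\Card{\P_\gamma}<\kappa$ the \Levy--Solovay theorem shows that $\mu$ still generates a normal, $\kappa$-complete ultrafilter $\mu_\gamma$ on $P(\kappa)^{V[G_\gamma]}$, with $X\in\mu_\gamma\Iff\exists Y\in\mu\,(Y\of X)$ for $X\in V[G_\gamma]$. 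Hence $\mu^*$ is a $W$-ultrafilter, because exactly one of $X$, $\kappa\minus X$ contains a member of $\mu$; it is $W$-normal, since a function of $W$ regressive on a $\mu^*$-large set lies in some $V[G_\gamma]$, where it is constant mod $\mu_\gamma\of\mu^*$; and it is $\ltkappa$-complete in $W$, since a $\ltkappa$-sequence of its members lies in some $V[G_\gamma]$ by~(2), where $\mu_\gamma$ is $\kappa$-complete. Note that $\mu^*$ contains every co-bounded subset of $\kappa$, so bounded subsets of $\kappa$ are $\mu^*$-null.

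For~(4) and~(5): any $f\colon\kappa\to W$ with $f\in W$ lies in some $V[G_\gamma]$, so $\ran f\of V[G_\gamma]$ and thus $\Ult(W,\mu^*)=\Union_{\gamma<\kappa}\Ult(V[G_\gamma],\mu_\gamma)$; each factor is well-founded by countable completeness of $\mu_\gamma$, and an infinite descending $\in_{\mu^*}$-chain would lie in $W$ by~(2) hence inside one $V[G_\gamma]$, a contradiction. So $\Ult(W,\mu^*)$ is well-founded; let $\Mbar$ be its transitive collapse and $j\colon W\to\Mbar$ the ultrapower map. That $\Mbar\satisfies\ZFCmm$ follows as usual for ultrapowers of $\ZFCmm$ models by a $W$-amenable, countably complete $W$-ultrafilter (alternatively, $\Mbar$ is the increasing union of the transitive \ZFC\ models $\Ult(V[G_\gamma],\mu_\gamma)$, and replacement transfers along the cofinal map $j$). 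For~(5), $\mu^*$ and the class of all functions $\kappa\to W$ lying in $W$ are definable over $W$, hence so are the relations $[f]=_{\mu^*}[g]$ and $[f]\in_{\mu^*}[g]$; Scott's trick inside $W$ produces a set-like, well-founded, extensional copy whose Mostowski collapse is a class $\Mbar$ definable over $W$ with definable collapse map, so $j$ is definable over $W$.

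It remains to prove~(6) and~(7), the heart of the matter. Let $\varphi(x)$ be the $\Sigma_1$-formula asserting that $x$ is an ordinal and there is a surjection from $\omega$ onto $x$; so $\varphi(x)$ says that $x$ is a countable ordinal. Because $\kappa=\omega_1^W$ --- each $\alpha<\kappa$ becomes countable in some $V[G_\gamma]$, while $\kappa$ stays inaccessible, hence a cardinal, in every $V[G_\gamma]$ --- we have $W\satisfies\varphi(\alpha)$ for every $\alpha<\kappa$, and so for $\mu^*$-almost every $\alpha$. Let $f$ be the identity function on $\kappa$, an element of $V\of W$. By $W$-normality $[f]_{\mu^*}$ collapses to $\kappa$ in $\Mbar$, so the critical point of $j$ is $\kappa$; in particular $j(\omega)=\omega$ and $\kappa<j(\kappa)$. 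I claim $\Mbar\not\satisfies\varphi(\kappa)$: a surjection from $\omega$ onto $\kappa$ in $\Mbar$ would be represented by some $h\in W$, and since being a surjection from $\omega$ onto a given set is $\Delta_0$ and the $\Delta_0$-\Los\ theorem holds here by replacement in $W$, this would force $\set{\alpha<\kappa\st h(\alpha)\ \hbox{is a surjection from}\ \omega\ \hbox{onto}\ \alpha}\in\mu^*$; but $h\in V[G_\gamma]$ for some $\gamma$, so $h(\alpha)\in V[G_\gamma]$ can be such a surjection only when $\alpha<\aleph_{\gamma+1}^V$, making this set bounded in $\kappa$ and hence $\mu^*$-null, a contradiction. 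Thus $W\satisfies\varphi(f(\alpha))$ for $\mu^*$-almost all $\alpha$ while $\Mbar\not\satisfies\varphi([f])$, the promised failure of \Los\ at the $\Sigma_1$-level, which is~(6). For~(7), $j$ is $\Delta_0$-elementary (by $\Delta_0$-\Los) and cofinal (ultrapower maps of $\ZFCmm$ models are cofinal by replacement), hence $\Sigma_1$-elementary; but the $\Pi_2$-formula $\theta(p)\equiv\forall x{\in}p\,\exists g\,(g\ \hbox{is a surjection from}\ \omega\ \hbox{onto}\ x)$ holds of $p=\kappa=\omega_1^W$ in $W$, while $\Mbar\not\satisfies\theta(j(\kappa))$ since $\kappa\in j(\kappa)$ and $\Mbar\not\satisfies\varphi(\kappa)$. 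Hence $j$ is not $\Pi_2$-, and so not $\Sigma_2$-, elementary, which is~(7). I expect the one genuine obstacle to be this last claim that $\Mbar\not\satisfies\varphi(\kappa)$ --- that no single function of $W$ can uniformly collapse a $\mu^*$-large set of ordinals below $\kappa$ --- which is exactly the collection failure at issue and rests on every function of $W$ being frozen into a single bounded stage of the \Levy\ collapse; checking that $[f]_{\mu^*}$ collapses to $\kappa$ is routine normal-ultrapower bookkeeping.
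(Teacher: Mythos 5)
Your treatment of the core clauses (6) and (7) is essentially the paper's own argument: the $\Sigma_1$-formula ``there is a surjection from $\omega$ onto $x$'' holds at every $\alpha<\kappa$ in $W$, yet no $[h]$ can surject $\omega$ onto $[\id]=\kappa$ in the ultrapower because any representing $h$ is frozen in a single $V[G_\gamma]$ and so the $\Delta_0$-\Los\ theorem would force a bounded set into $\mu^*$; the $\Pi_2$-assertion ``$\kappa=\omega_1$'' then witnesses non-$\Sigma_2$-elementarity. Clauses (1)--(3) are also handled as in the paper. However, there are two genuine gaps.

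First, your justification of $\Mbar\satisfies\ZFCmm$ in clause (4) does not work. ``Follows as usual for ultrapowers of $\ZFCmm$ models by a countably complete $W$-ultrafilter'' is exactly the kind of inference this theorem refutes: since \Los\ fails, the ultrapower of a model of a theory need not model that theory, so there is no ``usual'' argument to invoke. Your parenthetical alternative, that ``replacement transfers along the cofinal map $j$,'' is likewise unjustified---$j$ is only $\Sigma_1$-elementary, and the replacement scheme has unbounded complexity. The paper instead identifies $\Mbar=\Union_{\gamma<\kappa}M[G_\gamma]$ and $j=\Union_{\gamma<\kappa}j_\gamma$ explicitly (via $[f]_{\mu_\gamma}\mapsto[f]_{\mu^*}$), and then observes that $\Mbar$ is itself an instance of the \S\ref{Section.LevyCollapseInaccessible} construction carried out over $M$, where $\kappa$ is still inaccessible; replacement in $\Mbar$ is then established by the same homogeneity and mutual-genericity argument used for $W$. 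You have the decomposition $\Ult(W,\mu^*)=\Union_\gamma\Ult(V[G_\gamma],\mu_\gamma)$ in hand, so this is repairable, but the repair must be made.

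Second, your clause (5) invokes Scott's trick inside $W$, and this is precisely the step the paper warns is unavailable: without the power set axiom the collection of minimal-rank representatives of a $=_{\mu^*}$-class need not be a set (new representatives of the same minimal rank keep appearing in later $V[G_\delta]$, and by the failure of collection their union need not be a set of $W$), and moreover $\in_{\mu^*}$ need not be set-like on any chosen class of representatives, so the Mostowski collapse cannot simply be ``performed inside $W$.'' The paper's actual argument for definability is different: it first shows that every $g:\kappa\to V[G_\gamma]$ in $W$ is $\mu^*$-equivalent to a function lying in $V[G_\gamma]$, and then computes the Mostowski collapse of $[f]$ locally inside a sufficiently large transitive set such as $V_\theta[G_\gamma]$, checking that the answer is independent of the choice of such a set. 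Without some such device, clause (5) remains unproved.
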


\begin{proof}
We already observed in section \S\ref{Section.LevyCollapseInaccessible} that $W\satisfies \ZFCmm$ but not collection, and that $W^{\ltkappa}\of W$.
Let $\mu$ be any normal measure on $\kappa$ in $V$. Note that every
initial segment $\P_\gamma$ of the forcing is small relative to
$\kappa$. Thus, by the \Levy-Solovay theorem \cite{LevySolovay67} it
follows that $\kappa$ remains measurable in $V[G_\gamma]$, and
indeed, the filter $\mu_\gamma$ generated by $\mu$ in $V[G_\gamma]$
is a normal measure on $\kappa$ in $V[G_\gamma]$. Let $\mu^*$ be the
filter on $\kappa$ generated by $\mu$ in $W$, which is the same as
$\Union_\gamma \mu_\gamma$. Note that $\mu^*$ is not in $W$, since
it is not in any $V[G_\gamma]$, but it is definable over $W$ from
parameter $\mu$, since a set is in $\mu^*$ if and only if it covers
an element of $\mu$. In fact, $\mu^*$ is a $W$-normal measure on
$\kappa$, since every subset of $\kappa$ in $W$ is in some
$V[G_\gamma]$ and hence is measured by $\mu_\gamma$, and every
regressive function on $\kappa$ in $W$ is in some $V[G_\gamma]$ and
hence is constant on a $\mu_\gamma$-large set there. Moreover, the measure $\mu^*$ is countably complete in $V[G]$ since $W$
contains all its $\omega$-sequences from $V[G]$. We can construct in $V[G]$, since $W$ is definable there, the
ultrapower $\Ult(W,\mu^*)$ using functions on $\kappa$ in $W$.
Since $\mu^*$ is countably complete, it follows by the usual argument that $\Ult(W,\mu^*)$ is
well-founded, and so by taking the Mostowski collapse we obtain the ultrapower map $j:W\to \Mbar$ with $\Mbar$ transitive. In fact, we can both construct and collapse the ultrapower in $W$ itself, though this is not immediately obvious as Scott's trick that is crucial to this process may fail in the absence of power sets (see our remark after the proof). Note that the ultrapower map $j$ is $\Sigma_1$-elementary and cofinal, by our remarks in the introduction of this article.

Since the forcing $\P_\gamma$ for $\gamma\lt\kappa$ is small relative to~$\kappa$, it follows that the ultrapower map $j_0:V\to M$
 by $\mu$ in $V$ lifts uniquely to an elementary embedding
\hbox{$j_\gamma:V[G_\gamma]\to M[G_\gamma]$}, which necessarily equals the ultrapower map by
$\mu_\gamma$ in $V[G_\gamma]$.
In fact, whenever $\gamma<\delta$ are ordinals below $\kappa$, then $j_\delta\restrict V[G_\gamma]=j_\gamma$, since when we lift the ultrapower $j_\gamma$ to $V[G_\delta]$ we obtain by the smallness of the corresponding forcing precisely the ultrapower by $\mu_\delta$, which is the same as $j_\delta$. The union $\Union_{\gamma\lt\kappa} j_\gamma$ is thus a well-defined map. We claim that  $j=\Union_{\gamma\lt\kappa} j_\gamma$  and $\Mbar=\Union_{\gamma<\kappa}  M[G_\gamma] $ via the isomorphism $[f]_{\mu_\gamma}\mapsto [f]_{\mu^*}$ whenever $f:\kappa\to V[G_\gamma]$ is a function in $V[G_\gamma]$. This map is well-defined and $\in$-preserving since $[f]_{\mu_\gamma}=j_\gamma(f)(\kappa)=j_\delta(f)(\kappa)=[f]_{\mu_\delta}$ whenever
$\gamma<\delta$ and $[f]_{\mu_\gamma}\in M[G_\gamma]$. The map is clearly onto, and it follows that $[f]_{\mu_\gamma}=[f]_{\mu^*}$ whenever $[f]_{\mu_\gamma}\in M[G_\gamma]$. It follows that  $\Mbar\satisfies\ZFCmm$, since it is obtained
from $M$ by the \Levy\ collapse of $\kappa$ in the manner of section
\S\ref{Section.LevyCollapseInaccessible}.

Since $\kappa$ is an uncountable cardinal in $W$ and $\Mbar\of W$, it follows that $\kappa$ is an uncountable cardinal in $\Mbar$. Thus, although $\kappa=\omega_1^W$, it cannot be that $j(\kappa)=\omega_1^\Mbar$, since $\kappa$ itself is an uncountable cardinal strictly below $j(\kappa)$ in $\Mbar$. Thus $j$ is not elementary, and so the \Los\ theorem fails for the ultrapower $\Ult(W,\mu^*)$. Specifically, the map $j$ is not $\Sigma_2$-elementary, since the assertion ``$\kappa=\omega_1$" has complexity $\Pi_2$. Even without any prior knowledge about the precise structure of the ultrapower $\Ult(W,\mu^*)$ and its transitive collapse $\Mbar$, one can argue using the violation of collection that $j$ is not $\Sigma_2$-elementary, and that, indeed, \Los\ fails already for $\Sigma_1$-formulas. Recall that for all $\alpha<\kappa$, the model $W$ has surjections $f:\omega\surj\alpha$, but there cannot be a set in $W$ collecting a family of such functions. Now to see that \Los\ fails for $\Sigma_1$-formulas, observe that in the ultrapower there cannot exist a surjection from $[c_\omega]=\omega$ onto $[\id]=\kappa$, since otherwise, if $[g]:[c_\omega]\surj [\id]$ were such a surjection, then \hbox{$\{\alpha<\kappa\st g(\alpha) \text{ is a surjection from }\omega\text{ onto }\alpha\}$} would be a set in $\mu^*$ (by the \Los\  theorem for $\Delta_0$-formulas), and so there would be a collecting set in $W$. If $j$ were $\Sigma_2$-elementary, the ultrapower would have for all $\alpha<j(\kappa)$, a surjection from $\omega$ onto $\alpha$, and hence a surjection from $\omega$ onto $\kappa$.
\end{proof}

Before continuing, let us remark on some subtle issues concerning the extent to
which one can view the ultrapower of a $\ZFCmm$ or even a
$\ZFCm$ model as an internal construction inside such a
model. One issue is that even in the case that a measure
$\nu$ is definable in or perhaps even an element of a
$\ZFCm$ model $M$, then although one can define in $M$ the
fundamental relations $=_{\nu}$ and $\in_{\nu}$ used to
construct the ultrapower $\Ult(M,\nu)$, one seems unable in
general to find representing sets in a definable way for
the equivalence classes. Each equivalence class is, after
all, a proper class in $M$, an issue usually resolved in
the \ZFC\ context by means of Scott's trick, where one
restricts to the set of minimal-rank representatives in
each equivalence class; but Scott's trick doesn't succeed
in the $\ZFCmm$ or $\ZFCm$ contexts, because the collection
of minimal-rank representatives from a class may still not
be a set, when one lacks the power set axiom. Thus, one
seems to have difficulty performing the quotient operation,
defining the ultrapower quotient structure as a first-order
class model. Thus, if one wants to construct the ultrapower
internally, it seems that one may be forced always to deal
only with the pre-quotient structure, where one has only
the equivalence relation $=_{\nu}$ rather than a true
equality $=$ relation as in the quotient. A greater
difficulty is that even if one should be able definably to
find a representing set for each equivalence class, and
thereby have a quotient representation of the ultrapower as
a first-order class, one cannot necessarily perform the
Mostowski collapse, even when the ultrapower is
well-founded, because the $\in_\nu$ relation on those
classes is not necessarily set-like in $M$. In fact, even a
model of $\ZFCm$ containing a measurable cardinal is not
always able to construct the ultrapower and take its
Mostowski collapse. For an explicit example of this,
suppose that $\kappa$ is a measurable cardinal and fix an
elementary embedding $j:V\to M$ by a normal measure $\mu$
on $\kappa$ and a strong limit cardinal $\lambda$ of
cofinality $\kappa$. It follows that
$\lambda^+<j(\lambda)$, since $M$ is correct about ${}^\kappa\lambda$.
Consider $H_{\lambda^+}$, which is a model of $\ZFCm$
containing $\mu$ and all functions $f:\kappa\to
H_{\lambda^+}$. The collapsed ultrapower of $H_{\lambda^+}$
by $\mu$, therefore, is the same as the restriction
$j:H_{\lambda^+}\to H^M_{j(\lambda)^+}$, which is not a
subset of $H_{\lambda^+}$ since $H^M_{j(\lambda)^+}$
contains ordinals above $\lambda^+$. Thus, even though
$H_{\lambda^+}$ sees that $\mu$ is a measure on $\kappa$,
and is able to define the ultrapower relations and observe
correctly that the ultrapower is well-founded, it is not
able to perform the Mostowski collapse of this structure,
since it lacks sufficient ordinals to do so. One way to
describe the situation is that $H_{\lambda^+}$ does not
agree that a cardinal $\kappa$ is measurable (in the sense
of having a $\kappa$-complete nonprincipal ultrafilter on
$\kappa$) if and only if there is an ultrapower embedding
of the universe into a transitive class. So the equivalence
of these two characterizations of measurability is not
provable in $\ZFCm$.

Nevertheless, we can overcome these issues in the case of the
model $W$ and the measure $\mu^*$ that we construct in the proof of
theorem \ref{Section.LevyCollapseMeasurable}. In particular, let us
argue that both the Mostowski collapse $\Mbar$ of $\Ult(W,\mu^*)$
and the corresponding ultrapower map $j:W\to\Mbar$ of that proof are
definable classes in $W$. As we observed in the proof, for any $\gamma<\kappa$, the
 Mostowski collapse of $[f]$ in $\Ult(W,\mu^*)$ is the same as the
Mostowski collapse of $[f]$ in $\Ult(V[G_\gamma],\mu_\gamma)$. It follows that
 every function $g:\kappa\to V[G_\gamma]$ in $W$ is equivalent on
a set in $\mu$ to a function $g':\kappa\to V[G_\gamma]$ with $g'\in
V[G_\gamma]$.
This means that we can compute in $W$ the image of the Mostowski
collapse of $[f]$ in $\Ult(W,\mu^*)$ by performing the collapse
inside $V_\theta[G_\gamma]$ for large enough $\theta$. Indeed, all
that is needed is a sufficiently large transitive set $A$, such that
$f\in A$ and for any function $g\in W$ having $g\in_{\mu^*} h\in A$
for some function $h$, then there is $g'\in A$ with $g=_{\mu^*} g'$.
In this case, one shows that the Mostowski collapse of $[f]$ in
$\Ult(W,\mu^*)$ is the same as the collapse of $[f]$ in
$\<A,\in_{\mu^*}>/{=_{\mu^*}}$. Since there are abundant such $A$ in
$W$, such as $A=V_\theta[G_\gamma]$ for any sufficiently large
$\theta$, and they all give rise to the same value for the Mostowski
collapse of $[f]$, it follows that in $W$ we may definably associate
any function $f$ to its image under the Mostowski collapse of
$\Ult(W,\mu^*)$. Thus, $\Mbar$ is a definable class in $W$, and by
considering the constant functions, we may also thereby define the
ultrapower map $j:W\to \Mbar$.

Let us now turn to the \Levy\ hierarchy of formulas in the language of set theory, where $\Sigma_n$-formulas and $\Pi_n$-formulas are defined as usual in a purely syntactical way. Recall that if a formula $\varphi$ is obtained by bounded quantification over some $\Sigma_n$-formula, then there exists another $\Sigma_n$-formula $\varphi'$ that is provably equivalent in $\ZFC$ to $\varphi$. The standard proof of this fact uses repeated applications of collection and the pairing axiom, and so it  follows that the equivalence between $\varphi$ and  $\varphi'$ can also be proved in $\ZFCm$. Corollary~\ref{Corollary:Sigma_1NotClosed}
shows that the use of collection is essential to this argument, and one cannot obtain in some other way a formula $\varphi'$ of reduced complexity that is provably equivalent in $\ZFCmm$ to $\varphi$.

\begin{corollary}\label{Corollary:Sigma_1NotClosed}
Relative to a measurable cardinal, the collection of formulas that are provably equivalent in $\ZFCmm$ to a $\Sigma_1$-formula or a $\Pi_1$-formula is not closed under bounded quantification.\end{corollary}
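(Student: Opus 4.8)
The plan is to exhibit a formula that is syntactically $\Sigma_1$ but whose bounded universal quantification is provably equivalent in $\ZFCmm$ neither to a $\Sigma_1$-formula nor to a $\Pi_1$-formula; since the collection in question contains every $\Sigma_1$-formula, this will witness its failure to be closed under bounded quantification. Let $\theta(y)$ be the assertion ``$y$ is countable'', namely $y=\emptyset\vee\exists f\,\exists z\,(z=\omega\wedge f\text{ is a surjection from }z\text{ onto }y)$; since ``$z=\omega$'' and ``$f$ is a surjection from $z$ onto $y$'' are $\Delta_0$, the formula $\theta$ is $\Sigma_1$. Let $\varphi(x)$ be $\forall y{\in}x\,\theta(y)$, the statement that every element of $x$ is countable, which is obtained from $\theta$ by bounded universal quantification; dually, $\neg\varphi(x)$, which is $\exists y{\in}x\,\neg\theta(y)$, is obtained from the $\Pi_1$-formula $\neg\theta$ by bounded existential quantification. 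It therefore suffices to show that $\varphi$ is provably equivalent in $\ZFCmm$ to no $\Sigma_1$-formula and to no $\Pi_1$-formula; the corresponding statements for $\neg\varphi$ then follow by negation, so one in fact gets the conclusion for the class of $\Sigma_1$-formulas and for the class of $\Pi_1$-formulas at once.

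The tool will be the ultrapower embedding $j:W\to\Mbar$ furnished by Theorem~\ref{Theorem.LevyCollapseMeasurable}, where $W\satisfies\ZFCmm$ arises from the \Levy\ collapse of a measurable cardinal with $\kappa=\omega_1^W$, the transitive class $\Mbar$ also satisfies $\ZFCmm$ and $\Mbar\of W$, and $j$ is cofinal and $\Sigma_0$-elementary (by the \Los\ theorem for $\Delta_0$-formulas), hence $\Sigma_1$-elementary. First I would record two facts. (i) Being cofinal and $\Sigma_0$-elementary, $j$ is also $\Pi_1$-elementary: if $W\satisfies\forall x\,\psi_0(x,a)$ with $\psi_0\in\Delta_0$ and $b\in\Mbar$ is arbitrary, choose $c\in W$ with $b\in j(c)$, note that $W\satisfies\forall x{\in}c\,\psi_0(x,a)$ is a $\Delta_0$-fact, and transfer it to get $\Mbar\satisfies\forall x{\in}j(c)\,\psi_0(x,j(a))$, so that $\Mbar\satisfies\psi_0(b,j(a))$. (ii) We have $W\satisfies\varphi(\kappa)$ outright, since every element of $\omega_1^W$ is a countable ordinal in $W$; but $\Mbar\satisfies\neg\varphi(j(\kappa))$, because $\kappa<j(\kappa)$ as $\kappa$ is the critical point of $j$, so $\kappa\in j(\kappa)\in\Mbar$, while $\kappa$ is not countable in $\Mbar\of W$ since $\kappa=\omega_1^W$.

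With these facts in hand the two cases are short. If $\varphi$ were provably equivalent in $\ZFCmm$ to a $\Sigma_1$-formula $\varphi^*$, then since $W\satisfies\ZFCmm$ we would have $W\satisfies\varphi^*(\kappa)$ from $W\satisfies\varphi(\kappa)$, hence $\Mbar\satisfies\varphi^*(j(\kappa))$ by the $\Sigma_1$-elementarity of $j$, and then, since $\Sigma_1$-formulas are upward absolute from the transitive class $\Mbar\of W$ to $W$, also $W\satisfies\varphi^*(j(\kappa))$ and thus $W\satisfies\varphi(j(\kappa))$; but this says $\kappa\in j(\kappa)$ is countable in $W$, contradicting $\kappa=\omega_1^W$. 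If instead $\varphi$ were provably equivalent in $\ZFCmm$ to a $\Pi_1$-formula $\psi$, then from $W\satisfies\psi(\kappa)$ and the $\Pi_1$-elementarity of $j$ recorded in (i) we would get $\Mbar\satisfies\psi(j(\kappa))$, hence $\Mbar\satisfies\varphi(j(\kappa))$, contradicting $\Mbar\satisfies\neg\varphi(j(\kappa))$ from (ii). I expect the $\Pi_1$ case to be the crux: unlike the already known fact that $\varphi$ is not $\Sigma_1$-expressible over $\ZFCmm$, ruling out a $\Pi_1$ equivalent cannot be achieved with the $\Sigma_1$-elementarity of $j$ alone, and instead forces one to use the automatic $\Pi_1$-elementarity of cofinal $\Delta_0$-elementary maps together with the precise structural fact $\Mbar\of W$ for the ultrapower of Theorem~\ref{Theorem.LevyCollapseMeasurable} and the observation that $j(\kappa)$ retains $\kappa$ as an uncountable element in $\Mbar$.
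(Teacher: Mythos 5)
Your proof is correct and takes essentially the same approach as the paper: the same counterexample formula $\varphi(x)$ asserting that every element of $x$ is countable, and the same $\Sigma_1$-elementary ultrapower embedding $j:W\to \Mbar$ from Theorem~\ref{Theorem.LevyCollapseMeasurable}, with the contradiction arising because $\kappa\in j(\kappa)$ remains uncountable in $\Mbar\of W$. The only (welcome) difference is that you spell out the $\Pi_1$ case explicitly via the automatic $\Pi_1$-elementarity of cofinal $\Delta_0$-elementary maps, where the paper simply says ``the same argument shows'' it.
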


\begin{proof}
Let $\varphi(x)$ be the formula asserting that all elements of $x$ are countable, meaning that for each nonempty $y\in x$ there exists a surjection from $\omega$ onto $y$.  The formula $\varphi(x)$ is clearly obtained by bounded universal quantification over a $\Sigma_1$-formula, but we will show that it is not provably equivalent in $\ZFCmm$ to any $\Sigma_1$-formula or $\Pi_1$-formula. Consider the $\Sigma_1$-elementary embedding $j:W\to \Mbar$ from theorem~\ref{Theorem.LevyCollapseMeasurable} and note that the model $W$ satisfies $\varphi(\kappa)$ since it has collapsing functions for all ordinals below $\kappa$. If  there would be some $\Sigma_1$-formula $\varphi '(x)$ such that $\ZFCmm$ proves \hbox{$\forall x(\varphi(x)\leftrightarrow \varphi'(x))$}, then it would follow that $W\satisfies \varphi'(\kappa)$, and so $\Mbar\satisfies \varphi'(j(\kappa))$ by the $\Sigma_1$-elementarity of $j:W\to\Mbar$. Since $\Mbar\satisfies\ZFCmm$ , it would then follow that $M\satisfies \varphi(j(\kappa))$, which means that $\kappa$ would be countable in $M$, which is not the case. The same argument shows that there cannot be a $\Pi_1$-formula provably equivalent in $\ZFCmm$ to $\varphi$. \end{proof}

We will improve this result by avoiding the need for a measurable cardinal in section~\S\ref{Section.NoLargeCardinals}. Also, in subsequent sections, we will obtain counterexamples to
the \Los\ theorem involving ultrafilters that exist inside
the model, including ultrafilters on $\omega$ when
$P(\omega)$ exists, as well as other violations of the \Los\ and Gaifman theorems, and other counterexamples that do not require any large cardinals.

\subsection{A cofinal restriction of an elementary
embedding}\label{Section.CofinalRestrictionNotElementary}

In the previous section we proved that the Gaifman theorem can fail
for $\ZFCmm$ models, in the sense that there can be $j:M\to N$ for
transitive $\ZFCmm$ models $M$ and $N$, which is
$\Sigma_1$-elementary and cofinal, but not $\Sigma_2$-elementary. We
would like now to describe a dual situation, where one has a fully
elementary but non-cofinal embedding $j:M\to N$ of transitive $\ZFCmm$ models, whose canonical cofinal restriction $j:M\to \Union j\image M$ is not elementary. Note that since $M$ and $N$ are
transitive, then so is $\Union j\image M$, since it is a union of transitive sets (using replacement, one shows that $M$ satisfies that every set has a transitive closure). The restriction $j:M\to \Union j\image M$ is a $\Sigma_1$-elementary embedding, since it is clearly $\Delta_0$-elementary and also cofinal, but theorem~\ref{Theorem.CofinalRestrictionNotElementary} shows that it need not be $\Sigma_2$-elementary if collection fails in $M$. This stands in contrast to the situation when $M\satisfies \ZFCm$ and $j:M\to N$ is fully elementary, since one can then argue that $\Union j\image M \elesub N$ by the Tarski-Vaught test and conclude that $j:M\to \Union j\image M$ is fully elementary.

\begin{theorem}\label{Theorem.CofinalRestrictionNotElementary}
Relative to a measurable cardinal, it is consistent that there are transitive models $M$ and $N$ of $\ZFCmm$ with a fully elementary
embedding \hbox{$j:M\to N$}, whose cofinal restriction
$j:M\to \Union j\image M$ is an embedding of $\ZFCmm$ models that is not $\Sigma_2$-elementary.
\end{theorem}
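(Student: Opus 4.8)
The plan is to reuse the ultrapower $j\colon W\to\Mbar$ of Theorem~\ref{Theorem.LevyCollapseMeasurable} but to enlarge its target $\Mbar$ to a taller model $N\satisfies\ZFCmm$, obtained by collapsing more cardinals, so that this very map becomes the canonical cofinal restriction of a \emph{fully} elementary embedding $W\to N$. The key idea is that full elementarity is impossible into $\Mbar$ (by part (7) of Theorem~\ref{Theorem.LevyCollapseMeasurable}) precisely because $\Mbar$ does not collapse $\kappa$, so it suffices to let $N$ collapse $\kappa$.

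Concretely, I would start as in Theorem~\ref{Theorem.LevyCollapseMeasurable}: let $\kappa$ be measurable with normal measure $\mu$ and ultrapower $j_0\colon V\to M=\Ult(V,\mu)$, so the critical point is $\kappa$ and $\kappa_1:=j_0(\kappa)>\kappa$ is inaccessible (indeed measurable) in $M$. Put $\P=\Coll(\omega,{\ltkappa})$ and $\P^+=j_0(\P)=\Coll(\omega,{\lt}\kappa_1)^M$, noting that $\P_\gamma=\P^+_\gamma$ for $\gamma<\kappa$ (where $\P_\gamma$, $\P^+_\gamma$ denote the respective initial segments), so $\P$ is the initial $\kappa$ part of $\P^+$. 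Force an $M$-generic (e.g.\ $V$-generic) filter $H\of\P^+$---this passes to a forcing extension $V[H]$, which is harmless for a consistency claim, and one may instead force over a large $H_{\theta^\plus}$ as in the introduction---and set $G=H\restrict\P$ (which is $V$-generic for $\P$), $G_\gamma=G\intersect\P_\gamma$, and $H_\delta=H\intersect\P^+_\delta$, so that $H_\gamma=G_\gamma$ for $\gamma<\kappa$. Now let $W=\Union_{\gamma<\kappa}V[G_\gamma]$ be exactly the model of Theorem~\ref{Theorem.LevyCollapseMeasurable} (so $W\satisfies\ZFCmm$, collection fails in $W$, $\omega_1^W=\kappa$, and there is the cofinal $\Sigma_1$-elementary, non-$\Sigma_2$-elementary map $j\colon W\to\Mbar$ onto $\Mbar=\Union_{\gamma<\kappa}M[G_\gamma]\satisfies\ZFCmm$), and let $N=\Union_{\delta<\kappa_1}M[H_\delta]$ be the analogous $W$-construction over $M$ for the \Levy\ collapse of $\kappa_1$; by the argument of section~\S\ref{Section.LevyCollapseInaccessible} carried out inside $M$, $N$ is a transitive model of $\ZFCmm$. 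Since $H_\gamma=G_\gamma$ for $\gamma<\kappa$, the model $\Mbar$ is exactly the union of the initial $\kappa$ stages of $N$, so $\Mbar\of N$; and $\Mbar\neq N$, because $\aleph_\kappa^M=\kappa$, so some later stage $M[H_\delta]$ (with $\kappa\leq\delta<\kappa_1$) collapses $\kappa$, giving $N$ a bijection $\omega\to\kappa$ that $\Mbar$ lacks, as $\omega_1^\Mbar=\kappa$.

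To produce the embedding, observe that $j_0\image G=G\of H$ and $H$ is $M$-generic for $j_0(\P)$, so $j_0$ lifts to a fully elementary $j_0^+\colon V[G]\to M[H]$ with $j_0^+\restrict V=j_0$ and $j_0^+(G)=H$. The $W$-construction is a parameter-definable class of its forcing extension, uniformly in the ground model (definable there by Laver's theorem, as the paper already notes for $W$ in $V[G]$) and in the collapsed cardinal (recovered from the generic filter); applying $j_0^+$---which carries $G$ to $H$ and the Laver parameter for $V$ to the Laver parameter for $M$---one finds that $j_0^+$ sends this definition of $W$ in $V[G]$ to the definition of $N$ in $M[H]$. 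Relativizing formulas to these definable classes and invoking elementarity of $j_0^+$, the restriction $j:=j_0^+\restrict W\colon W\to N$ is fully elementary. On the other hand, $j_0^+\restrict V[G_\gamma]$ is the unique lift of $j_0$ to $V[G_\gamma]\to M[G_\gamma]$ (it extends $j_0$, fixes $G_\gamma$, and lands in $M[G_\gamma]$), so $j=\Union_{\gamma<\kappa}(j_0^+\restrict V[G_\gamma])$ is literally the map $j\colon W\to\Mbar$ of Theorem~\ref{Theorem.LevyCollapseMeasurable}, which maps $W$ cofinally into the transitive $\Mbar$; hence $\Union j\image W=\Mbar\subsetneq N$. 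Therefore $j\colon W\to N$ is a fully elementary embedding of transitive $\ZFCmm$ models whose canonical cofinal restriction $j\colon W\to\Union j\image W=\Mbar$ is, by part (7) of Theorem~\ref{Theorem.LevyCollapseMeasurable}, an embedding of $\ZFCmm$ models that is $\Sigma_1$-elementary but not $\Sigma_2$-elementary. Finally, passing to the transitive collapse of a countable elementary submodel of a sufficiently large $H_{\theta^\plus}$ containing all these objects (as in the introduction) yields countable transitive witnesses, giving the theorem.

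The hard part will be the verification that $j=j_0^+\restrict W$ is \emph{fully} elementary, i.e.\ that the lifted ultrapower respects the uniform definition of the $W$-construction. This amounts to pinning down a single formula defining the $W$-model from the generic filter and a Laver parameter (including that the forcing and the collapsed cardinal are recoverable from the generic) and checking that $j_0^+(G)=H$ and that $j_0^+$ maps the Laver parameter for $V$ in $V[G]$ to the Laver parameter for $M$ in $M[H]$. This is genuinely the crux: the identical map $j$, viewed as $W\to\Mbar$, is provably \emph{not} $\Sigma_2$-elementary, and full elementarity into $N$ appears only because $N$ collapses $\kappa$---for instance, the $\Pi_2$ assertion ``$j(\kappa)=\omega_1$'' is true in $N$ but false in $\Mbar$. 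The remaining points---the existence of $H$ (force with $j_0(\P)\in M$) and the strictness $\Mbar\subsetneq N$, which guarantees that the cofinal restriction is a proper restriction---are routine.
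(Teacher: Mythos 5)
Your proposal is correct and follows essentially the same route as the paper's proof: lift the ultrapower $j_0:V\to M$ to $j_0^+:V[G]\to M[H]$ using $j_0\image G=G\of H$, use Laver's uniform definability of the ground model to see that $j_0^+$ carries the $W$-construction over $V$ to the analogous construction $N$ over $M$ (so $j_0^+\restrict W:W\to N$ is fully elementary), and then identify $\Union j\image W$ with $\Mbar=\Union_{\gamma<\kappa}M[G_\gamma]$, where full elementarity fails at the $\Pi_2$ assertion ``$\kappa=\omega_1$'' because no $M[G_\gamma]$ with $\gamma<\kappa$ collapses $\kappa$. The only cosmetic difference is that you present the generic as a single $M$-generic $H\of j_0(\P)$ restricted to $\P$, whereas the paper factors $j(\P)\cong\P\times\P_{\kappa,j(\kappa)}$ and adjoins the tail generic over $V[G]$; these are the same construction.
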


\begin{proof}
Let us suppose that $\kappa$ is a measurable cardinal in $V$.
Let $V[G]$ be the \Levy\ collapse of $\kappa$, meaning that
$G\of\P=\Coll(\omega,\ltkappa)$ is $V$-generic, and let
$W=\Union_{\gamma<\kappa} V[G_\gamma]$, as in section
\S\ref{Section.LevyCollapseInaccessible}, so that we know
$W\satisfies\ZFCmm$. Let $j:V\to M$ be the ultrapower by a normal
measure $\mu$ on $\kappa$ in $V$, and consider the forcing
$j(\P)=\Coll(\omega,{\lt}j(\kappa))$, which can be factored as
$j(\P)\cong\P\times\P_{\kappa,j(\kappa)}$, where
$\P_{\kappa,j(\kappa)}$ is the part of the forcing collapsing
cardinals in the interval $[\kappa,j(\kappa))$. Suppose that
$H\of\P_{\kappa,j(\kappa)}$ is $V[G]$-generic. Since $G\times H$ is
$V$-generic for $j(\P)$ and $j\image G=G$, it follows that $j$ lifts
to the fully elementary $j:V[G]\to M[j(G)]$ in $V[G][H]$, where $j(G)=G\times H$. Since $W$ is definable in $V[G]$,
using Laver's result~\cite{Laver2007:CertainVeryLargeCardinalsNotCreated} on the uniform definability of the ground model in the forcing extension from the generic filter and a poset-dependent parameter, we may restrict $j$ to $W$ and obtain in $V[G][H]$ the fully elementary embedding $j\restrict W: W\to j(W)$, where $j(W)=\Union_{\gamma\lt j(\kappa)} M[j(G)_\gamma]$ by the uniformity of Laver's definition.
The cofinal restriction $j\restrict W:W\to \Union j\image W$ is $\Sigma_1$-elementary, but it is not fully elementary, since all ordinals below $\kappa$ are countable in $W$, but the ordinal $\kappa$ below $j(\kappa)$ cannot be countable in $\Union j\image W$, since it is easy to see that $\Union j\image W\of \Union_{\gamma<\kappa}M[j(G)_\gamma]=\Union_{\gamma<\kappa}M[G_\gamma]$ and no $M[G_\gamma]$ can collapse cardinals above $\gamma$. The assertion ``$\kappa=\omega_1$" has complexity $\Pi_2$, and we observe more precisely that $j:W\to \Union j\image W$ is not $\Sigma_2$-elementary. The embedding $j\restrict W:W\to j(W)$ is thus not cofinal, but to see this directly, note that, for example, the reals added by $j(G)_\kappa$ are in $j(W)$ but not in $\Union j\image W$.

Indeed, $\Union j\image W= \Union_{\gamma<\kappa}M[G_\gamma]$, since $x\in M[G_\gamma]$ implies that $x=j(f)(\kappa)$ for some function $f\in V[G_\gamma]$, as  the restriction of $j$ to $V[G_\gamma]$ is a lift of the ultrapower map $j$ and hence an ultrapower map as well, and consequently $x\in j(\ran f)\of \Union j\image W$. It follows that $\Union j\image W$ satisfies $\ZFCmm$. Note finally that the map \hbox{$j\restrict W:W\to
\Union j\image W$} is precisely the
ultrapower of $W$ by $\mu^*$ as described in section
\S\ref{Section.LevyCollapseMeasurable} by uniqueness of the lift of $j$ to each $V[G_\gamma]$.
\end{proof}

Theorem~\ref{Theorem.CofinalRestrictionNotElementary} can be used to show that standard arguments from seed theory\footnote{For a review of elementary seed theory, see for instance~\cite{Hamkins97:Seeds}.}, permissible in the context of $\ZFCm$ models, can fail in the context of $\ZFCmm$ models.

\begin{corollary}\label{Corollary.SeedHull}
Relative to a measurable cardinal, it is consistent that there are elementary embeddings \hbox{$j:M\to N$} of transitive models of $\ZFCmm$ and sets \hbox{$S\of \Union j\image M$} such that the seed hull $\X_S=\{j(f)(s)\st s\in [S]^\ltomega, f\in M\}$ is not a $\Sigma_1$-elementary substructure of $N$, and such that the restriction $j:M\to\X_S$ is not $\Sigma_2$-elementary.
\end{corollary}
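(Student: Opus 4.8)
The plan is to realize the corollary inside the construction of Theorem~\ref{Theorem.CofinalRestrictionNotElementary}: take $M=W$ and $N=j(W)$ for the fully elementary embedding $j:W\to j(W)$ produced there, and let $S=\{\kappa\}$, a single seed. First I would note that $S\of\Union j\image W$, since $\kappa$ is the critical point of $j$ and so $\kappa\in j(\kappa)\of\Union j\image W$. The crucial point is then the identification $\X_S=\Set{j(f)(\kappa)\st f\in W}=\Union j\image W$. The inclusion from left to right is immediate, since $j(f)(\kappa)\in j(\ran f)\of\Union j\image W$ and $j(c)\in\Union j\image W$ for every constant $c$; the reverse inclusion is precisely the computation made in the proof of Theorem~\ref{Theorem.CofinalRestrictionNotElementary}, where it is shown that every element of $\Union j\image W=\Union_{\gamma<\kappa}M[G_\gamma]$ has the form $j(f)(\kappa)$ for some function $f\in V[G_\gamma]\of W$, using that $j\restrict V[G_\gamma]$ is an ultrapower map. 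Thus the seed hull $\X_S$ is literally the range of the cofinal restriction $j\restrict W$.

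With this in hand, both clauses of the corollary become restatements of facts already established in the proof of Theorem~\ref{Theorem.CofinalRestrictionNotElementary}. The restriction $j:M\to\X_S$ coincides with $j\restrict W:W\to\Union j\image W$, which that proof shows to be $\Sigma_1$-elementary and cofinal but not $\Sigma_2$-elementary; concretely, the $\Pi_2$-assertion ``$\kappa=\omega_1$'' holds in $W$ while $\kappa$ is an uncountable cardinal in $\X_S$. For the failure of $\Sigma_1$-elementarity of $\X_S$ in $N$, I would use the $\Pi_2$-formula $\varphi(x)$ from the proof of Corollary~\ref{Corollary:Sigma_1NotClosed} asserting that every element of $x$ is countable. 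Since $W\satisfies\varphi(\kappa)$ and $j:W\to N$ is fully elementary, we get $N\satisfies\varphi(j(\kappa))$, and since $\kappa\in j(\kappa)$ the model $N$ therefore satisfies the $\Sigma_1$-assertion ``there is a surjection of $\omega$ onto $\kappa$'', with the parameters $\omega,\kappa\in\X_S$. But this $\Sigma_1$-assertion fails in $\X_S=\Union j\image W$, because no $M[G_\gamma]$ collapses $\kappa$, so $\kappa$ remains uncountable there. Hence $\X_S$ is not a $\Sigma_1$-elementary substructure of $N$, as required.

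The only genuine content beyond bookkeeping is the equality $\X_{\{\kappa\}}=\Union j\image W$, which is what allows the machinery of Theorem~\ref{Theorem.CofinalRestrictionNotElementary} to be reused wholesale; the remaining steps merely track the \Levy\ complexity of the two witnessing statements. The one place that deserves a moment of care is the convention for $\X_S$ when the seed is the ordinal $\kappa$ itself: one should let $f$ range over all functions in $W$ whose domain contains $\kappa$, together with constants, so that the representation $x=j(f)(\kappa)$ obtained in the proof of Theorem~\ref{Theorem.CofinalRestrictionNotElementary} applies verbatim. I would also note that, since $\X_S$ is here a union of transitive sets and hence transitive, its Mostowski collapse is trivial, so the map $j:M\to\X_S$ is itself the cofinal $\Sigma_1$-elementary but non-elementary map witnessing a failure of the Gaifman theorem.
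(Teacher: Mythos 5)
Your proposal is correct and follows essentially the same route as the paper: take the embedding $j\restrict W\colon W\to j(W)$ from Theorem~\ref{Theorem.CofinalRestrictionNotElementary}, let $S=\{\kappa\}$, identify $\X_{\{\kappa\}}$ with $\Union j\image W=\Union_{\gamma<\kappa}M[G_\gamma]$, and observe that the $\Sigma_1$-assertion ``there is a surjection of $\omega$ onto $\kappa$'' holds in $j(W)$ but fails in the seed hull because no $M[G_\gamma]$ collapses $\kappa$. Your extra remarks (the explicit $\Pi_2$-formula route to seeing that $\kappa$ is countable in $j(W)$, and the note on the convention for functions applied to the seed $\kappa$) are consistent with, and slightly more detailed than, the paper's argument.
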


\begin{proof}
Consider the elementary embedding $j\restrict W:W\to j(W)$ of the proof of theorem~\ref{Theorem.CofinalRestrictionNotElementary} and let $X_{\{\kappa\}}=\{j(f)(\kappa)\st f:\kappa\to W, f\in W\}$ be the seed hull of $\{\kappa\}$ via $j$. Observe that $X_{\{\kappa\}}=\Union_{\gamma<\kappa}M[G_\gamma]$. The seed hull $X_{\{\kappa\}}$ is not a $\Sigma_1$-elementary substructure of $j(W)$, since if it were, then $\X_{\{\kappa\}}$ would have a surjection from $\omega$ onto $\kappa$, but we know that this is not the case, since no $M[G_\gamma]$ collapses $\kappa$ if $\gamma<\kappa$.
\end{proof}

\subsection{Violating \Los\ with a measurable cardinal inside the
model}\label{Section.ViolatingLosWithMeasurableInside} We
shall now modify the construction of section
\S\ref{Section.LevyCollapseMeasurable} in order to arrive at
a more definitive violation of the \Los\ theorem for
$\ZFCmm$ models, by producing a model $W\satisfies\ZFCmm$
in which there is a measurable cardinal $\kappa$ whose
powerset $P(\kappa)$ exists in $W$ and for which there is a
$\kappa$-complete normal ultrafilter $\mu$ on $\kappa$ in
$W$, whose ultrapower $\Ult(W,\mu)$ is well-founded and can
be constructed and collapsed inside $W$, but the ultrapower
map $j:W\to \Ult(W,\mu)$ is not elementary.

\begin{theorem}\label{Theorem.LevyCollapseAboveMeasurable}
Relative to a measurable cardinal, it is consistent with $\ZFCmm$ that there is a
measurable cardinal $\kappa$ for which $P(\kappa)$ exists
and there is a $\kappa$-complete normal measure on
$\kappa$, whose ultrapower is a well-founded model of $\ZFCmm$ that can be constructed and collapsed, but for which
the \Los\ theorem fails at the $\Sigma_1$-level and the ultrapower map is not
$\Sigma_2$-elementary.
\end{theorem}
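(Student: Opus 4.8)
The plan is to modify the construction of section~\S\ref{Section.LevyCollapseMeasurable} by collapsing only the cardinals \emph{strictly between $\omega$ and a measurable $\kappa$}, rather than the cardinals below $\kappa$ itself, so that $\kappa$ survives as an honest measurable cardinal with its powerset present in the final model $W$. Concretely, start in $V$ with a measurable cardinal $\kappa$ carrying a normal measure $\mu$, and suppose (as we may by a preliminary forcing) that $2^{<\kappa}=\kappa$ and that there is an increasing $\omega$-sequence $\<\delta_n\st n<\omega>$ cofinal in some strong limit cardinal $\lambda<\kappa$, or more simply, arrange an internal cofinal-at-$\omega$ situation below $\kappa$ by choosing the collapses to be indexed by a $V$-definable $\omega$-sequence of cardinals $\aleph_{\alpha_n}$ approaching some fixed cardinal below $\kappa$. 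First I would let $\P$ be the finite-support product $\Pi_{n<\omega}\Coll(\aleph_{\alpha_n},\aleph_{\alpha_{n+1}})$ (or any analogous sequence of collapses designed to kill a whole block of cardinals in $\omega$-many steps), let $G\of\P$ be $V$-generic with initial segments $G_n$ and $\P_n$ as before, and set $W=\Union_{n<\omega} V[G_n]$. The verification that $W\satisfies\ZFCmm$ is identical in structure to the arguments already given: extensionality, foundation, pairing, union, infinity and well-ordering are immediate since $W$ is an increasing union of transitive \ZFC\ models, and replacement is verified by the same swapping-of-mutually-generic-copies and almost-homogeneity argument, using that each stage $\Coll(\aleph_{\alpha_n},\aleph_{\alpha_{n+1}})$ absorbs an extra copy of $\P_n$.

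Next I would check that $\kappa$ is still measurable in $W$ with $P(\kappa)$ an element of $W$. Since each $\P_n$ has size less than $\kappa$ (indeed bounded below $\kappa$), the \Levy--Solovay theorem~\cite{LevySolovay67} gives that $\kappa$ remains measurable in every $V[G_n]$, with the filter $\mu_n$ generated by $\mu$ a normal measure there. Because the whole forcing $\P$ itself has size less than $\kappa$ (it is a countable-support-free product of posets bounded below $\kappa$, hence of size $<\kappa$ if the $\alpha_n$ are chosen so that $\sup_n\aleph_{\alpha_n}<\kappa$), in fact $\P\in V_\kappa$ and so $\kappa$ remains measurable in the \emph{full} extension $V[G]$; let $\mu^{V[G]}$ be the generated normal measure. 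The point is then that $W$ and $V[G]$ agree on $P(\kappa)$: every subset of $\kappa$ in $V[G]$ has a nice name of size $\leq\kappa$ using antichains in $\P$, but $\P$ is small, so in fact every subset of $\kappa$ in $V[G]$ already lies in some $V[G_n]$, hence in $W$; and conversely $P(\kappa)^{V[G]}$ is a set in $V[G]$, so by the stabilization just noted it equals $P(\kappa)^{V[G_n]}$ for large $n$ and is therefore an element of $W$. Consequently $\mu:=\mu^{V[G]}\restrict W = \mu^{V[G]}$ is an actual element of $W$, and it is a $\kappa$-complete normal ultrafilter on $\kappa$ \emph{in} $W$, since $W$ computes $P(\kappa)$ correctly and $\kappa$-completeness and normality are statements about subsets of $\kappa$ and regressive functions $\kappa\to\kappa$, all of which live in $W$. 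I would also argue, exactly as in the remark following Theorem~\ref{Theorem.LevyCollapseMeasurable}, that $\Ult(W,\mu)$ is well-founded in $V[G]$ (countable completeness, using $W^\omega\of W$ in $V[G]$ — which holds here since $W$ absorbs $\omega$-sequences), and that it can be internally constructed and Mostowski-collapsed in $W$: one identifies the collapse of $[f]$ with the collapse of $[f]$ computed inside any sufficiently large $V_\theta[G_n]\in W$ capturing $f$ and all the $=_\mu$-representatives it needs, using that every $g:\kappa\to V[G_n]$ in $W$ is $\mu$-equivalent to one lying in $V[G_n]$.

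Finally, the failure of \Los\ at the $\Sigma_1$ level is by the same collection-violation argument as in Theorem~\ref{Theorem.LevyCollapseMeasurable}. The collection scheme fails in $W$: for each ordinal $\xi$ in the collapsed block $[\aleph_{\alpha_1},\sup_n\aleph_{\alpha_n})$ there is in $W$ a surjection $\aleph_{\alpha_0}\surj\xi$ (making all these ordinals of size $\aleph_{\alpha_0}$ in $W$), but no single set in $W$ collects such surjections for cofinally many $\xi$, since each such set lies in some $V[G_n]$, which does not collapse the cardinals reached only at stage $n+1$ and beyond; phrase this with collapsing maps onto the cardinals themselves if a cleaner statement is wanted. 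Now if $j:W\to\Mbar\cong\Ult(W,\mu)$ were $\Sigma_2$-elementary, then since $\Mbar$ thinks $j(\kappa)$ is measurable it would in particular think there is a surjection from a fixed small cardinal onto every ordinal below $j(\kappa)$ wherever $W$ thinks so below $\kappa$ — and pulling back via $[g]:[c]\surj[\id]$ would, by \Los\ for $\Delta_0$-formulas (which \emph{does} hold, by replacement in $W$, as noted in the introduction), give a $\mu$-large set of $\xi$ for which $g(\xi)$ is such a surjection, and hence a collecting set in $W$, a contradiction; so $j$ is not $\Sigma_2$-elementary and \Los\ fails already for a $\Sigma_1$-formula. \textbf{The main obstacle} I anticipate is getting the smallness and cardinal-arithmetic bookkeeping exactly right so that $P(\kappa)$ genuinely lands in $W$ while collection genuinely fails — one must choose the collapsing block to be both cofinal-at-$\omega$ from $W$'s perspective (to kill collection) and bounded well below $\kappa$ in $V$ (so that $\P\in V_\kappa$, preserving measurability and keeping $P(\kappa)$ small and stable); once those constraints are balanced, the rest is a routine adaptation of the earlier sections.
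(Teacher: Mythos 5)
Your proposal has a fatal gap at exactly the point you flag as ``bookkeeping'': with the collapsing block placed \emph{below} $\kappa$, the set $P(\kappa)$ and the measure $\mu$ never become elements of $W$. Each nontrivial stage $\Coll(\aleph_{\alpha_n},\aleph_{\alpha_{n+1}})$ adds a new subset of $\aleph_{\alpha_n}$, hence a new subset of $\kappa$, so $P(\kappa)^{V[G_n]}\subsetneq P(\kappa)^{V[G_{n+1}]}$ for every $n$ and there is no stabilization. Consequently $P(\kappa)^W=\Union_n P(\kappa)^{V[G_n]}$ is not contained in any $V[G_n]$, hence (by transitivity) is not an element of any $V[G_n]$ and is a proper class of $W$, not a set; likewise the generated filter $\mu^*=\Union_n\mu_n$ grows strictly at each stage and lies in no $V[G_n]$, so it is at best definable over $W$, exactly the situation of section~\S\ref{Section.LevyCollapseMeasurable} that this theorem is meant to improve upon. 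Your intermediate claim that every subset of $\kappa$ in $V[G]$ lies in some $V[G_n]$ is also false: the generic $G$ itself has size less than $\kappa$ and can be coded as a subset of $\kappa$ in $V[G]$, but it lies in no $V[G_n]$. Smallness of $\P$ gives Levy--Solovay preservation of measurability in $V[G]$, but that is the wrong model; what is needed is that no stage adds \emph{any} subset of $\kappa$, and small forcing always does.

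The paper's proof goes in the opposite direction: it performs all the collapsing \emph{above} $\kappa$, taking $\lambda=\aleph_{\kappa+\kappa}$ and forcing with the bounded-support product $\Coll(\kappa^\plus,{\lt}\lambda)=\Pi_{\alpha<\kappa}\Coll(\kappa^\plus,\aleph_{\kappa+\alpha})$, whose initial segments $\P_\gamma$ are all ${\leqkappa}$-closed. This freezes $P(\kappa)^V=P(\kappa)^{V[G_\gamma]}=P(\kappa)^W$, so $P(\kappa)$ and the ground-model normal measure $\mu$ are literally elements of $V\of W$; it also guarantees that for every $x\in W$ the collection of all $f:\kappa\to x$ is a set in $W$, which is what lets $W$ construct and collapse the ultrapower internally. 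Collection still fails because the index set of the product has order type $\kappa$ and $W$ sees surjections $\kappa^\plus\surj\aleph_{\kappa+\alpha}^V$ for each $\alpha<\kappa$ with no collecting set, and the $\Sigma_1$-failure of \L o\'s\ is then read off from the function $h(\alpha)=\aleph_{\kappa+\alpha}^V$ just as in your final paragraph. So your closing argument is in the right spirit, but the construction feeding it must be replaced by the above-$\kappa$ closed collapse; the below-$\kappa$ version cannot be repaired.
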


\begin{proof}
The idea is to combine the methods of sections
\S\ref{Section.LevyCollapseAleph_omega} and
\S\ref{Section.LevyCollapseMeasurable}, but performing all
the collapse forcing only above the measurable cardinal, so
that this cardinal and its measure will be preserved to the
extension and to the resulting model $W$ of $\ZFCmm$.
Suppose that $\kappa$ is a measurable cardinal in
$V\satisfies\ZFC+\GCH$, and let
$\lambda=\kappa^{\plus^\kappa}=\aleph_{\kappa+\kappa}$,
the $\kappa^\th$ cardinal successor to $\kappa$. (The \GCH\
assumption can be relaxed simply by using
$\beth_{\kappa+\kappa}$ in place of $\lambda$.)  Let $\P$ be the \Levy\ collapse of $\lambda$, meaning the bounded-support product $\Coll(\kappa^+,{\lt}\lambda)=\prod_{\alpha<\kappa}\Coll(\kappa^+,\aleph_{\kappa+\alpha})$.
Although the forcing $\Coll(\kappa^\plus,\aleph_{\kappa+\alpha})$ on any
coordinate $\aleph_{\kappa+\alpha}$ is $\leqkappa$-closed, the $\P$ forcing
is not $\leqkappa$-closed because of the bounded-support
requirement, and in the limit $\P$ will actually collapse
$\lambda$ to $\kappa$, since the function mapping
$\alpha<\kappa$ to the first ordinal used on coordinate
$\aleph_{\kappa+\alpha}$ will map $\kappa$ onto $\lambda$.
But every initial segment of the forcing
$\P_\gamma=\prod_{\alpha\leq\gamma}\Coll(\kappa^\plus,\aleph_{\kappa+\alpha})$,
for $\gamma\lt\kappa$, is $\leqkappa$-closed and therefore
adds no subsets to $\kappa$.

Suppose that $G\of\P$ is $V$-generic for this forcing, and let
$G_\gamma=G\intersect\P_\gamma$. Our intended model is
$W=\Union_{\gamma<\kappa}V[G_\gamma]$, which satisfies $\ZFCmm $, by arguments analogous to those previously given, but not collection. To see that collection fails in $W$, observe that  for each $\alpha<\kappa$ there exists a surjective function $f:\kappa^+\surj\aleph_{\kappa+\alpha}^V$ in $W$, but there cannot be a set in $W$ collecting a family of such functions.

Since all $\P_\gamma$ are $\leqkappa$-closed, we have that
$P(\kappa)^V=P(\kappa)^{V[G_\gamma]}=P(\kappa)^W$. Thus,
$P(\kappa)$ exists unchanged in $W$. Furthermore, if $\mu$
is any normal measure on $\kappa$ in $V$, then $\mu$
continues to be a normal measure on $\kappa$ in $W$. Note
that for every $x\in W$, the collection of all $f:\kappa\to
x$ forms a set in $W$ since if $x\in V[G_\gamma]$, then no
new functions from $\kappa$ to $x$ are added by subsequent
collapses. Therefore $W$ can construct the ultrapower
$\Ult(W,\mu)$ as the union of the ultrapowers $\Ult(x,\mu)$
over all transitive sets $x\in W$ and take its Mostowski
collapse (this requires replacement) to a model $\Mbar$ and
let $j:W\to \Mbar$ be the resulting embedding.

We shall now observe that \Los\ already fails at the $\Sigma_1$-level for this ultrapower and that the map $j:W\to \Mbar$ is not $\Sigma_2$-elementary, using the violation of collection described above. Let $h$ be a function on $\kappa$ in $V\of W$ such that $h(\alpha)=\aleph_{\kappa+\alpha}^V$, and recall that for all $\alpha<\kappa$, the model $W$ has surjections from $\kappa^+$ onto $h(\alpha)$ but no collecting set of such functions. It follows that there cannot be a surjection \hbox{$[f]:j(\kappa^+)\surj [h]$} in the ultrapower, since otherwise the set $\{\alpha<\kappa\st f:\kappa^+\surj h(\alpha)\}$ would be an element of $\mu$ (by \Los\ for $\Delta_0$-formulas), yielding a collecting set, and thus \Los\ fails at the $\Sigma_1$-level. It also follows that the ultrapower map $j$ is not elementary for the $\Pi_2$-formula $\forall \alpha{<}\kappa\,\exists f\, f:\kappa^+\surj h(\alpha)$ using $\kappa^+$ and $h$ as parameters, since otherwise the ultrapower would have a surjection from $j(\kappa^+)$ onto $j(h)(\kappa)=[h]$.

It remains to argue that $\Mbar$ is nevertheless a model of $\ZFCmm$. Let $j:V\to M$ be the ultrapower embedding by $\mu$ in $V$.
Since the forcing $\P_\gamma$ is $\leqkappa$-closed, it follows that $j$
lifts uniquely to $j_\gamma:V[G_\gamma]\to M[j(G_\gamma)]$, where
$j(G_\gamma)$ is the filter in $j(\P_\gamma)$ generated by $j\image
G_\gamma$. (This is generic since every open dense set $D\of
j(\P_\gamma)$ is $j(\vec D)(\kappa)$ for some $\vec
D=\<D_\alpha\st\alpha<\kappa>$ with $D_\alpha\of\P_\gamma$ open
dense, and by $\leqkappa$-closure it follows that
$\Dbar=\Intersect_\alpha D_\alpha$ remains dense and has
$j(\Dbar)\of D$; since $G_\gamma$ meets $\Dbar$, it follows that
$j\image G_\gamma$ meets $j(\Dbar)$ and hence $D$, as desired.) We
claim that $\Mbar=\Union_{\gamma\lt\lambda} M[j(G_\gamma)]$ and
$j=\Union_{\gamma\lt\kappa} j_\gamma$ by the map that associates
$[f]_\mu\in\Ult(W,\mu)$ to $[f]_\mu$ in any $\Ult(V[G_\gamma],\mu)$
for which $f\in V[G_\gamma]$. This association is well-defined and $\in$-preserving, since if $f\in V[G_\gamma]$ and $\gamma<\delta<\kappa$, then $j_\gamma(f)(\kappa)=[f]_\mu$ as computed in  $\Ult(V[G_\gamma],\mu)$, which is the same as $j_\delta(f)(\kappa)=[f]_\mu$ as computed in $\Ult(V[G_\delta],\mu)$, by the uniqueness of the lifted embeddings. The association is onto and hence an
isomorphism, and it commutes with the ultrapower maps. Since $\Mbar=\Union_{\gamma\lt\lambda} M[j(G_\gamma)]$, one can verify that $\Mbar\satisfies\ZFCmm$ just as we previously verified $W\satisfies\ZFCmm$, except that now we are collapsing all cardinals below $\aleph_{j(\kappa)+\kappa}^M$ to $j(\kappa^+)$ over $M$.
\end{proof}

\subsection{Violating the \Los\  and Gaifman theorems without large cardinals.}\label{Section.NoLargeCardinals} Analogous arguments as in theorems~\ref{Theorem.LevyCollapseMeasurable},~\ref{Theorem.CofinalRestrictionNotElementary} and~\ref{Theorem.LevyCollapseAboveMeasurable} and corollaries~\ref{Corollary:Sigma_1NotClosed} and~\ref{Corollary.SeedHull} work in the context of elementary embeddings characterizing smaller large cardinals.  For instance, if $\kappa$ is weakly compact, we can find a transitive set $M\satisfies\ZFC$  of size $\kappa$ with $V_\kappa\in M$ and an ultrapower map $j:M\to N$ by an $M$-normal measure on $\kappa$ such that $N$ is transitive. The poset $\Coll(\omega,\ltkappa)$ is an element of $M$ as it is definable over $V_\kappa$, and so in $V[G]$, the \Levy\ collapse of $\kappa$, we may form the forcing extension $M[G]$ and let $W=\Union_{\gamma<\kappa} M[G_\gamma]$, which satisfies $\ZFCmm$, but not collection. The rest of the constructions now proceed identically to those previously given.

In fact, these arguments can be modified to work with elementary embeddings of transitive models whose existence follows directly from $\ZFC$, thereby avoiding any need for large cardinals.

\begin{theorem}\label{Theorem.LevyCollapseAlephOmega1ForHauser}
There are transitive set models $M$ and $N$ of $\ZFCmm$ with a fully elementary embedding $j:M\to N$, 
whose cofinal restriction \hbox{$j:M\to \Union j\image M$} is not $\Sigma_2$-elementary.
\end{theorem}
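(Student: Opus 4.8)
The plan is to reprise the proof of Theorem~\ref{Theorem.CofinalRestrictionNotElementary}, but to replace the two ingredients that there relied on a measurable cardinal---the elementary embedding $j_0\colon V\to M$ and the \Levy\ collapse of $\aleph_\omega$ to $\omega$---with ingredients provided by $\ZFC$ alone. For the embedding, I would fix a regular cardinal $\theta$ (large enough that $H_\theta$ reflects all the relevant properties) and build, by the standard method of a continuous increasing chain $\<Y_\xi\st\xi\leq\omega_1>$ of countable elementary submodels of $H_\theta$---arranged so that $Y_\xi\intersect\omega_1$ is an ordinal for every $\xi$ and $\<Y_\eta\st\eta\leq\xi>\in Y_{\xi+1}$---together with the transitive collapses $\bar M_\xi$ and the induced fully elementary maps $j_{\xi\eta}\colon\bar M_\xi\to\bar M_\eta$ obtained from the collapse isomorphisms. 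As is well known, each $\bar M_\xi$ is a transitive model of $\ZFCm$ and $\cp(j_{\xi\eta})=\omega_1^{\bar M_\xi}$, so this produces, in $\ZFC$ alone, transitive set models with a fully elementary embedding $j_0\colon\bar M\to\bar N$ whose critical point $\kappa$ is the $\omega_1$ of the domain. (By forcing $\GCH$ first, or by replacing $\aleph$-numbers with $\beth$-numbers, we may also assume $\bar M\satisfies\GCH$.)

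For the ``bad'' model I would collapse to $\omega_1$ rather than to $\omega$, precisely so that $\kappa$ survives as a regular cardinal while collection still fails and, crucially, so that the collapse forcing is ${<}\kappa={<}\cp(j_0)$-closed. Working inside $\bar M$, let $\P$ be the full-support product $\Pi_{2\leq n<\omega}\Coll(\kappa,\aleph^{\bar M}_n)$ and let $G\of\P$ be generic; set $W=\Union_{n<\omega}\bar M[G\restrict n]$. Then, by arguments entirely analogous to those of \S\ref{Section.LevyCollapseInaccessible} and the generalization of Theorem~\ref{Theorem.SetsOfRealsCountableYetomega1} (using that $\P$ is almost homogeneous and decomposes at each stage into the stage-$n$ forcing followed by a mutually generic second copy of it), $W\satisfies\ZFCmm$ but not collection; moreover $\kappa$ remains regular and is still $\omega_1$ in $W$, while $\omega_2^W=\aleph^{\bar M}_\omega$ has cofinality $\omega$ there, and for each $n$ there is a surjection $\kappa\surj\aleph^{\bar M}_n$ in $W$ with no collecting family. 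Since $\P$ is ${<}\cp(j_0)$-closed, the usual lifting technology applies: the pointwise image $j_0\image G$ generates a filter beneath the external master condition $j_0\image g=\Union j_0\image G$ (available because every condition of $\P$ has support bounded strictly below $\kappa$), and since the models are countable we may meet, below that master condition, the countably many relevant dense sets of $j_0(\P)$ to obtain a generic $H$ and a fully elementary lift $j\colon\bar M[G]\to\bar N[H]$. Restricting gives a fully elementary $j\restrict W\colon W\to j(W)$, where $j(W)=\Union_{n<\omega}\bar N[H\restrict n]$; set $M=W$ and $N=j(W)$.

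Finally one argues, exactly in the spirit of Theorem~\ref{Theorem.CofinalRestrictionNotElementary}, that the cofinal restriction $j\restrict W\colon W\to\Union j\image W$ is not $\Sigma_2$-elementary, using the failure of collection. Because $\cp(j)=\kappa$, the map $j$ is the identity on sets of low rank, so $\Union j\image W$ contains $j$ of all the witnesses needed to see that, in $\Union j\image W$, every cardinal of $W$ lying in $[\kappa,\omega_2^W)$ has size $\kappa$; hence $W$ satisfies the $\Pi_2$ assertion ``for every ordinal $\alpha<\omega_2$ there is a surjection from $\omega_1$ onto $\alpha$'' (with $\omega_1,\omega_2$ as parameters), while---as in the earlier proof, tracking which ``piece'' $\bar N[H\restrict n]$ receives the restriction of $j$ to $\bar M[G\restrict n]$, and exploiting that $j_0$ is \emph{not} cofinal so that $\Union j\image W$ is a proper submodel of $\Union_n\bar N[H\restrict n]$---one checks that $\Union j\image W$ contains no surjection from $j(\omega_1)$ onto $\omega_2^W=\aleph^{\bar M}_\omega$, even though $\omega_2^W$ is strictly below $j(\omega_2^W)$. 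Thus the $j$-image of that $\Pi_2$ assertion fails in $\Union j\image W$, so $j\restrict W$ is not $\Sigma_2$-elementary; and, as in the previous constructions, $\Union j\image W\satisfies\ZFCmm$. The main obstacle is exactly this last verification: one must engineer the \Levy\ collapse to be ${<}\cp(j_0)$-closed (so the lift exists) yet still collapse enough cardinals that collection fails, and---since $j_0$ is a collapse-type embedding rather than an ultrapower, and the target $\bar N$ is a ``short'' model that regards $\kappa$ as countable---one must pin down precisely which sets lie in $\Union j\image W$ and confirm that this cofinal image is small enough to register a genuine $\Sigma_2$ discrepancy, the analogue here of the fact that $\Union j\image W$ did not collapse the critical point in Theorem~\ref{Theorem.CofinalRestrictionNotElementary}.
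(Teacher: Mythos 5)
Your overall strategy---extracting an elementary embedding $j_0\colon\bar M\to\bar N$ from transitive collapses of elementary submodels of $H_\theta$, lifting it through a sufficiently closed collapse product via a master condition, and restricting to the union $W$ of the initial-stage extensions---is exactly the paper's. But there is a genuine gap in the final verification, and it traces back to your choice of forcing. You collapse the cardinals $\aleph_n^{\bar M}$ to $\kappa=\cp(j_0)$ using a product indexed by $\omega$. Since $j(\omega)=\omega$, the index set of the product is not stretched by $j$, and this destroys the intended $\Sigma_2$ discrepancy. Concretely: every cardinal of $\bar N$ in the interval $(j(\kappa),j(\aleph_\omega^{\bar M}))$ is one of the $\aleph_n^{\bar N}=j(\aleph_n^{\bar M})$, and each of these is collapsed to $j(\kappa)$ by $j(f_n)\in\Union j\image W$, where $f_n\in W$ is a collapsing function. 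Worse, your proposed witness $\alpha=\aleph_\omega^{\bar M}$ fails outright: $\aleph_\omega^{\bar M}$ is a countable ordinal of $V$, and $\bar N$ regards it as countable (whether $\bar N$ is the size-$\omega_1$ model containing $\omega_1$ or a later countable model of your chain containing $\bar M$ as an element), so $\bar N$ contains a surjection from $\omega$, hence from $j(\kappa)$, onto $\aleph_\omega^{\bar M}$; that surjection is an element of $j(x)$, where $x$ is the set of all functions from $\kappa$ to $\aleph_\omega^{\bar M}$ as computed in $\bar M$, and hence lies in $\Union j\image M$. The same reasoning (composing $j(f_n)$ with bijections in $\bar N$) shows that \emph{every} ordinal below $j(\aleph_\omega^{\bar M})$ is a surjective image of $j(\kappa)$ inside $\Union j\image M$, so the $\Pi_2$ assertion you want to refute in fact transfers, and no $\Sigma_2$ failure is exhibited.

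The paper avoids this precisely by making the length of the product equal to the critical point: inside $M$ it forces with the bounded-support \Levy\ collapse of $\aleph_{\omega_1}$ to $\omega_2$, a product of length $\omega_1^M=\cp(j)$, so that $j$ stretches the product to length $\omega_1^N$. The cardinal $\aleph_{\omega_1^M}^N$ of $N$ then lies strictly between $\omega_2^N$ and $j(\aleph_{\omega_1}^M)=\aleph_{\omega_1}^N$, is not in the range of $j$, and is not collapsed by any $N[j(G)_\gamma]$ with $\gamma<\omega_1^M$; since $\Union j\image W\of\Union_{\gamma<\omega_1^M}N[j(G)_\gamma]$, the $\Pi_2$ assertion ``$\aleph_{\omega_1}=\omega_2^\plus$'' fails to transfer. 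If you re-run your argument with this forcing---your chain-of-submodels construction of $j_0$ and the master-condition lift both adapt without difficulty, though you do need the target model closed under $\omega$-sequences to find the master condition---the proof goes through.
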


\begin{proof}
We start by constructing an appropriate elementary embedding $j:M\to N$ with critical point $\omega_1^M$. First, we may assume without loss of generality that GCH holds, by passing if necessary to an inner model. Let $\theta>2^{\aleph_{\omega_1}}$ be any regular cardinal. Let $X$ be a countable elementary substructure of $H_\theta$ with Mostowski collapse $\pi_X:X\to M$, and observe that $\omega_1^M$ exists in $M$, and that $M\satisfies \ZFCm$. Using \CH, let $Y\supseteq X$ be an elementary substructure of $H_\theta$ of size $\omega_1$ with $Y^\omega\of Y$. Let $\pi_Y:Y\to N$ be the Mostowski collapse of $Y$, and observe that $N\satisfies\ZFCm$ and $N^\omega\of N$. The composition map $j=\pi_Y\circ \pi_X^{-1}$ is then an elementary embedding $j:M\to N$ with critical point $\omega_1^M$, that is $\omega_1^M<j(\omega_1^M)=\omega_1^N=\omega_1$, so that $\omega_1^M$ is a countable ordinal in $N$.

Working inside the model $M$, note that $P(\aleph_{\omega_1})$ exists and we may thus consider the \Levy\ collapse of cardinals below $\aleph_{\omega_1}$ to $\omega_1$, meaning the bounded-support product $\P=\Coll(\omega_1,{\lt}\aleph_{\omega_1})=\prod_{\beta<\omega_1}\Coll(\omega_1,\aleph_\beta)$. The forcing $\P$ is countably closed in $M$, but not $\leqomega_1$-closed, and forcing with $\P$ will collapse $\aleph_{\omega_1}$ to $\omega_1$; for example, the function mapping $\alpha<\omega_1$ to the first ordinal mentioned on coordinate $\aleph_\alpha$ will map $\omega_1$ onto $\aleph_{\omega_1}$.

Since $\P$ is the \Levy\ collapse of $\aleph_{\omega_1}^M$, collapsing to $\omega_1^M$, as computed in the model $M$, it follows by elementarity that $j(\P)=\Coll(\omega_1,{\lt}\aleph_{\omega_1})^N$ is the \Levy\ collapse of $\aleph_{\omega_1}^N$, collapsing to $\omega_1^N$, as computed in $N$. Note that $j(\P)$ is countably closed, since it is countably closed in $N$ and $N^\omega\of N$. We aim to lift the elementary embedding $j:M\to N$ to forcing extensions of $M$ and $N$ by $\P$ and $j(\P)$, respectively, using generic filters that exist in $V$.
Since $M$ is countable, there  exists an $M$-generic filter $G\of \P$ that is generated by a countable descending sequence $\{p_n\st n\in \omega\}$ of conditions  in $\P$. By closure of $j(\P)$, we can find a condition $q\in j(\P)$ that is below all the $j(p_n)$, and since $N$ has size $\omega_1$, we can  build an $N$-generic filter $j(G)\of j(\P)$ having $q$ as an element. It follows that $j\image G\of j(G)$, and so we can lift the embedding in $V$ to $j:M[G]\to N[j(G)]$.

For each $\gamma<\omega_1^M$, let $G_\gamma=G\intersect \P_\gamma$. Our intended model is $W=\Union_{\gamma<\omega_1^M} M[G_\gamma]$, which satisfies $\ZFCmm$, by arguments analogous to those previously given, but not collection. To see that collection fails in $W$, observe that for every $\alpha<\aleph_{\omega_1}^M$, the model $W$ has a surjective function $f:\omega_1^M\surj \alpha$, but no set containing a family of such functions by the chain condition of $\P_\gamma$ as the \GCH\ holds in $M$, and thus there cannot be a  set in $W$ collecting a family of such functions.

We may assume without loss of generality that $W$ is a definable class in $M[G]$, by starting the construction if necessary\footnote{The proof of the ground model definability theorem~\cite{Laver2007:CertainVeryLargeCardinalsNotCreated}, based on the methods of \cite{Hamkins2003:ExtensionsWithApproximationAndCoverProperties}, makes essential use of the power set axiom, and one of the main results in \cite{GitmanJohnstone2014:GroundModels} shows that ground models of $\ZFCm$ and $\ZFCmm$ models need not be definable in their forcing extensions.} in an inner model such as $L$. We may therefore restrict $j$ to $W$ and obtain, as in theorem~\ref{Theorem.CofinalRestrictionNotElementary}, a fully elementary embedding $j\restrict W: W\to j(W)$, where $j(W)=\Union_{\gamma<\omega_1}N[j(G)_\gamma)]$. The canonical cofinal restriction $j\restrict W:W\to \Union j\image W$ is $\Sigma_1$-elementary, but it is not fully elementary, since for each $\alpha<\aleph_{\omega_1}^M$ there is a function in $W$ collapsing $\alpha$ to $\omega_1^M$, but there is no function in  $\Union j\image W\of \Union_{\gamma<{\omega_1^M}}N[j(G)_\gamma]$ that collapses the cardinal $\aleph_{\omega_1^M}^N$, which is strictly below $j(\aleph_{\omega_1}^M)=\aleph_{\omega_1}^N$, since no $N[j(G)_\gamma]$ collapses cardinals  above $\aleph_\gamma^N$. Altogether, we have observed that there are no cardinals between $\omega_1^M$ and $\aleph_{\omega_1}^M$ in $W$, but there are cardinals in $\Union j\image W$ between $j(\omega_1^M)=\omega_1^N$ and $j(\aleph_{\omega_1}^M)=\aleph_{\omega_1}^N$, since $\aleph_{\omega_1^M}^N$  itself is such a cardinal. Since the assertion ``$\lambda=\delta^\plus$" where $\lambda=\aleph_{\omega_1}^M$ and $\delta=\omega_1^M$ has complexity $\Pi_2$, it follows that the cofinal restriction $j\restrict W:W\to \Union j\image W$ is not $\Sigma_2$-elementary.
\end{proof}

Using the map $j:W\to \Union j\image W$ from theorem~\ref{Theorem.LevyCollapseAlephOmega1ForHauser}, it is easy to obtain
the result of corollary \ref{Corollary:Sigma_1NotClosed}, this time without large cardinals.

Theorem~\ref{Theorem.LevyCollapseAlephOmega1ForHauser} can also be used to provide failures of the elementarity of seed hulls, but in addition to the violations of corollary~\ref{Corollary.SeedHull}, we now obtain for a single elementary embedding $j:M\to N$ uncountably many distinct seed hulls $\X_S$ that are not elementary substructures of $N$ and that are not generated by a single seed. Indeed, consider the elementary embedding $j\restrict W:W\to j(W)$ from the proof of theorem~\ref{Theorem.LevyCollapseAlephOmega1ForHauser}. Let $S\of \Union j\image W$ be any seed set containing $\aleph^N_{\omega_1^M}$ as a subset and note that $\X_S\of \Union j\image W$.  The seed hull $\X_S$ cannot be generated by a single seed, since $W$ is countable, but $\X_S$ is uncountable. If $\X_S$ were $\Sigma_1$-elementary in $j(W)$, then $\X_S$ and hence $\Union j\image W$ would contain a surjection from $\omega_2^M$ onto $\aleph^N_{\omega_1^M}$, but we argued already in theorem~\ref{Theorem.LevyCollapseAlephOmega1ForHauser} that this is not the case.
Observe that there are uncountably many distinct seed hulls: Since $N$ is closed under countable sequences, it follows that $N[j(G)]^\omega\of N[j(G)]$ and consequently $j(W)^\omega\of j(W)$, and so $j\restrict W$ is an element of $j(W)$. The model $j(W)$ can construct seed hulls for seed sets $S\in j(W)$ of different cardinalities in $j(W)$, and the corresponding seed hulls $\X_S$ are thus all distinct. Lastly, the map $j:W\to \X_S$ is a $\Sigma_1$-elementary cofinal map that is not $\Sigma_2$-elementary, and the same is true for the map $\pi\circ j:W\to \ran(\pi)$ where  $\pi:\X_S\to \ran(\pi)$ denotes the transitive collapse of $\X_S$. These two maps provide therefore counterexamples to the Gaifman theorem in the $\ZFCmm$ context for maps that are not ultrapower maps.

Essentially the same arguments as in theorem~\ref{Theorem.LevyCollapseAlephOmega1ForHauser} allow us to obtain a failure of the \Los\ theorem, without any need for large cardinals.

\begin{theorem}\label{Theorem.LevyCollapseAlephOmega1ForUltrapower}
There is a transitive set $M\satisfies \ZFCmm$  and an $M$-normal measure $\mu\of P(\omega_1)^M$ for which the ultrapower of $M$ by $\mu$ is a well-founded model of $\ZFCmm$, but the ultrapower map  is not $\Sigma_2$-elementary, and \Los\ fails at the $\Sigma_1$-level.
\end{theorem}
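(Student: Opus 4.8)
The plan is to combine the construction in the proof of Theorem~\ref{Theorem.LevyCollapseAlephOmega1ForHauser} with the ultrapower analysis of Theorem~\ref{Theorem.LevyCollapseAboveMeasurable}, replacing the measurable cardinal of the latter by the critical point of a Skolem-hull embedding; the model produced will play the role of $M$ in the statement, and I will reserve $M,N$ for the Skolem collapses. Assuming \GCH\ without loss of generality (passing to an inner model if necessary), I would first fix, exactly as in Theorem~\ref{Theorem.LevyCollapseAlephOmega1ForHauser}, a countable $X\prec H_\theta$ with Mostowski collapse $M$ and an elementary substructure $Y\supseteq X$ of $H_\theta$ of size $\omega_1$ with $Y^\omega\of Y$, with collapse $N$, so that $j_0=\pi_Y\circ\pi_X^{-1}\colon M\to N$ is a fully elementary embedding with critical point $\kappa:=\omega_1^M$, where $M$ is countable, $N$ has size $\omega_1$, and $N^\omega\of N$. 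Let $\mu=\{A\in P(\kappa)^M\st \kappa\in j_0(A)\}$ be the ultrafilter on $\kappa$ derived from $j_0$; because $\kappa$ is the critical point of $j_0$, this $\mu$ is $M$-normal and $M$-$\kappa$-complete, where completeness and normality are understood with respect to sequences and regressive functions lying in $M$. Then, working in $M$, I would take the \Levy\ collapse $\P$ of $\aleph_{\omega_1}^M$ to $\omega_2^M$ by the bounded-support product of length $\omega_1^M$, as in Theorem~\ref{Theorem.LevyCollapseAlephOmega1ForHauser}; each initial segment $\P_\gamma$ for $\gamma<\kappa$ is $\leqkappa$-closed in $M$, being a product of countably many $\leqkappa$-closed posets, and so adds no new subsets of $\kappa$. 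Fixing an $M$-generic $G\of\P$ generated by a descending $\omega$-sequence, putting $G_\gamma=G\intersect\P_\gamma$, and letting $W=\Union_{\gamma<\kappa}M[G_\gamma]$, the usual arguments give $W\satisfies\ZFCmm$ but not collection, the failure of collection witnessed as in Theorem~\ref{Theorem.LevyCollapseAlephOmega1ForHauser} by the surjections $\omega_2^M\surj\aleph_\alpha^M$ for $\alpha<\kappa$ that cannot be gathered into one set. Since $P(\kappa)^W=P(\kappa)^M$, the filter $\mu$ measures every subset of $\kappa$ in $W$; and as every regressive function on $\kappa$ in $W$, and every $W$-indexed sequence of fewer than $\kappa$ subsets of $\kappa$, already lies in $M$, the filter $\mu$ is a $W$-normal, $W$-$\kappa$-complete measure on $\kappa=\omega_1^W$, with $\mu\of P(\omega_1)^W$.

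Next I would lift $j_0$ through the forcing and analyze the ultrapower of $W$ by $\mu$, in parallel with Theorems~\ref{Theorem.LevyCollapseAboveMeasurable} and~\ref{Theorem.CofinalRestrictionNotElementary}. Using the $\leqkappa$-closure of the $\P_\gamma$, the countability of $M$, and the closure $N^\omega\of N$ (to obtain a master condition below the images of the generating sequence and then build an $N$-generic $j(G)$ by meeting $N$'s $\omega_1$-many dense sets), one gets in $V$ a fully elementary $j\colon M[G]\to N[j(G)]$; assuming, as we may by starting over an inner model such as $L$, that $W$ is definable in $M[G]$, this restricts to a fully elementary $j\restrict W\colon W\to j(W)$ with $j(W)=\Union_{\gamma<\omega_1}N[j(G)_\gamma]$, critical point $\kappa$, and with $\mu$ equal to the measure on $\kappa$ derived from $j\restrict W$. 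Now form $\Ult(W,\mu)$ using the functions on $\kappa$ lying in $W$ (this construction and its Mostowski collapse can also be carried out inside $W$, since $W$ contains all functions from $\kappa$ into any of its sets; compare the remark after Theorem~\ref{Theorem.LevyCollapseMeasurable}). The assignment $[f]_\mu\mapsto j(f)(\kappa)$ is a well-defined $\in$-embedding of $\Ult(W,\mu)$ into the transitive class $j(W)$, so $\Ult(W,\mu)$ is well-founded; and its transitive collapse $\Mbar$ is, via the isomorphism sending $[f]_\mu$ to $[f]_\mu$ as computed in $\Ult(M[G_\gamma],\mu)$ for any $\gamma$ with $f\in M[G_\gamma]$ (using the uniqueness of the lifted embeddings, as in Theorem~\ref{Theorem.LevyCollapseAboveMeasurable}), the increasing union of the ultrapowers $\Ult(M[G_\gamma],\mu)$. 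Each of these satisfies the \Los\ theorem, since $M[G_\gamma]\satisfies\ZFC$, and the union is obtained from $\Ult(M,\mu)$ by a \Levy\ collapse above $j_0(\kappa)$, so the same arguments that showed $W\satisfies\ZFCmm$ give $\Mbar\satisfies\ZFCmm$. The ultrapower map $j_\mu\colon W\to\Mbar$ is cofinal, and hence $\Sigma_1$-elementary, by the remarks in the introduction.

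Finally I would read off the two failures exactly as in Theorem~\ref{Theorem.LevyCollapseAboveMeasurable}. Let $h\in M$ be the function with $h(\alpha)=\aleph_\alpha^M$ for $\alpha<\kappa$. Since the critical point of $j_\mu$ is $\kappa$, we have $\kappa<j_\mu(\kappa)$ and $j_\mu(h)(\kappa)=[h]$. For every $\alpha<\kappa$ the model $W$ has a surjection $\omega_2^M\surj h(\alpha)$, so $\{\alpha<\kappa\st W\satisfies\exists f\,(f\colon\omega_2^M\surj h(\alpha))\}=\kappa\in\mu$; if \Los\ held at the $\Sigma_1$-level there would be a surjection $[e]\colon j_\mu(\omega_2^M)\surj[h]$ in $\Mbar$, and by the \Los\ theorem for $\Delta_0$-formulas (available from replacement in $W$) the set $Z=\{\alpha<\kappa\st e(\alpha)\colon\omega_2^M\surj h(\alpha)\}$ would be $\mu$-large, so the range of $e\restrict Z$ -- a set in $W$ by replacement -- would collect surjections $\omega_2^M\surj h(\alpha)$ for cofinally many $\alpha<\kappa$, contradicting the failure of collection in $W$. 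The same argument shows $\Mbar$ has no surjection from $j_\mu(\omega_2^M)$ onto $[h]$, so the $\Pi_2$-assertion $\forall\alpha{<}\kappa\,\exists f\,(f\colon\omega_2^M\surj h(\alpha))$ -- true in $W$ with parameters $\kappa,\omega_2^M,h$ -- fails in $\Mbar$, whence $j_\mu$ is not $\Sigma_2$-elementary.

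I expect the main obstacle to lie in the lifting step and the identification of the ultrapower. One must show that $j_0$ lifts through the collapse forcing -- whose initial segments $\P_\gamma$ are not small relative to $\kappa$, so that the usual small-forcing shortcut is unavailable and one must instead use the $\leqkappa$-closure of the $\P_\gamma$ to build master conditions, together with the countability of $M$ and the size and closure hypotheses on $N$, and then invoke uniqueness of lifts as in Theorem~\ref{Theorem.LevyCollapseAboveMeasurable} -- and then confirm that the cofinal restriction $j\restrict W\colon W\to\Union j\image W$ is genuinely the Mostowski collapse of $\Ult(W,\mu)$, realized as the increasing union of the collapsed ultrapowers of the $M[G_\gamma]$. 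The only other delicate point is the now-familiar one, noted after Theorem~\ref{Theorem.LevyCollapseMeasurable}, of performing the ultrapower construction and the Mostowski collapse internally in $W$ without the power set axiom, which goes through here precisely because no $\P_\gamma$ adds new subsets of $\kappa$ or new functions on $\kappa$ with a prescribed range.
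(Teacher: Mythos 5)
Your proposal is correct and follows essentially the same route as the paper: derive the $M$-normal measure on $\omega_1^M$ from a countable-hull embedding, form $W$ from the bounded-support \Levy\ collapse of $\aleph_{\omega_1}^M$ to $\omega_2^M$, use the $\leqomega_1$-closure of the initial segments to see that $\mu$ and all relevant functions survive into $W$, identify $\Ult(W,\mu)$ as the increasing union of the ultrapowers of the $M[G_\gamma]$, and read off both failures from the violation of collection. The only (harmless) divergence is that the paper first replaces $j:M\to N$ by the ultrapower map of $M$ by $\mu$, so that the lifted $j_\gamma$ are automatically the ultrapower maps of the $M[G_\gamma]$ and no lift of the full embedding $M[G]\to N[j(G)]$ is needed, whereas you retain the larger target $N$ and obtain well-foundedness by factoring $\Ult(W,\mu)$ through the lifted hull embedding.
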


\begin{proof}
Following the proof of theorem~\ref{Theorem.LevyCollapseAlephOmega1ForHauser}, we obtain an elementary embedding $j:M\to N$ with critical point $\omega_1^M$, where $M$ is the Mostowski collapse of some countable $X\elesub H_\theta$ for a regular cardinal $\theta>2^{\aleph_{\omega_1}}$ and $N$ is the Mostowski collapse of some $Y\elesub H_\theta$ with $X\of Y$. If $\mu\of P(\omega_1)^M$ is the $M$-normal measure on $\omega_1^M$ that is obtained from $j:M\to N$ by using $\omega_1^M$ as a seed, then the ultrapower $\Ult(M,\mu)$ of $M$, using functions on $\omega_1^M$ in $M$, is well-founded, and so we may assume without loss of generality that $j:M\to N$ is the ultrapower by $\mu$. It follows, in particular, that $N$ is countable.

Again, we assume that \GCH\ holds. As in theorem~\ref{Theorem.LevyCollapseAlephOmega1ForHauser}, we consider the bounded-support product $\P=\Coll(\omega_2,{\lt}\aleph_{\omega_1})^M$, and since $M$ is countable, choose in $V$ an $M$-generic filter $G\of\P$ and let $G_\gamma=G\intersect P_\gamma$ for each $\gamma<\omega_1^M$. Our intended model is again $W=\Union_{\gamma<\omega_1^M} M[G_\gamma]$.
Since each initial forcing $\P_\gamma$ is $\leqomega_1$-closed in $M$, it follows, as in theorem~\ref{Theorem.LevyCollapseAboveMeasurable}, that $j:M\to N$ lifts uniquely to $j_\gamma:M[G_\gamma]\to N[j(G_\gamma)]$, where $j(G_\gamma)$ is the $N$-generic filter in $j(\P)$ generated by $j\image G_\gamma$, and the map $j_\gamma$ is necessarily equal to the ultrapower map of $M[G_\gamma]$ by $\mu$. It follows that the ultrapower $\Ult(W,\mu)$ of $W$, using functions on $\omega_1^M$ in $W$, is  well-founded, since if $j:W\to \Nbar$ is the ultrapower map, then $\Nbar=\Union_{\gamma<\omega_1^M}N[j(G_\gamma)]$ and $j=\Union_{\gamma<\omega_1^M}j_\gamma$ via the isomorphism that associates $[f]_\mu\in \Ult(W,\mu)$ to $[f]_\mu$ in any $\Ult(M[G_\gamma],\mu)$ for which $f\in M[G_\gamma]$. Thus, $\Nbar$ satisfies $\ZFCmm$. An argument analogous to that in the proof of theorem~\ref{Theorem.LevyCollapseAlephOmega1ForHauser} uses the violation of collection to show that \Los\ fails at the $\Sigma_1$-level and the ultrapower map $j:W\to \Nbar$ is not $\Sigma_2$-elementary. Thus, $j$ witnesses the failure of both the \Los\ and Gaifman theorems.
\end{proof}

\subsection{Violating \Los\ for ultrapowers on $\omega$}
In this final section, we aim to produce a counterexample
to the \Los\ and Gaifman theorems for $\ZFCmm$ models for ultrapowers by ultrafilters on $\omega$.

\begin{theorem}\label{Theorem.LevyCollapseAboveAlephomega}
In a forcing extension $V[G]$ by the \Levy\ collapse $G\of\Coll(\aleph_1,{\lt}\aleph_\omega)$  there is a transitive class inner model $W\satisfies\ZFCmm$  in which $P(\omega)$ exists and there
is an ultrafilter $\mu$ on $\omega$, such that the
ultrapower map $j:W\to\Ult(W,\mu)$ is definable in $W$, but is not $\Sigma_2$-elementary and, indeed, \Los\ fails at the $\Sigma_1$-level.
\end{theorem}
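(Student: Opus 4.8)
The plan is to repeat the construction of section~\ref{Section.LevyCollapseAleph_omega}, but collapsing down to $\aleph_1$ rather than to $\omega$, so that no new reals appear. Assume \GCH\ in $V$ (this hypothesis can be relaxed, e.g.\ to $2^{\aleph_0}<\aleph_\omega$, and one can pass to a suitable inner model if it fails). Let $\P_n=\Pi_{k\le n}\Coll(\aleph_1,\aleph_k)$, let $G_n=G\intersect\P_n$, and set $W=\Union_{n<\omega}V[G_n]$. The verification that $W\satisfies\ZFCmm$ but not collection is exactly as in section~\ref{Section.LevyCollapseAleph_omega}: the forcing $\Coll(\aleph_1,\aleph_{n+1})$ may be viewed, by forcing equivalence, as first performing another copy of $\P_n$ and then the remainder of the collapse (using that $\P_n$ is $\sigma$-closed of size $\aleph_n$ under \GCH), so one may swap two mutually generic copies of $\P_n$ inside $\P_{n+1}$; the relevant automorphisms do not affect $\dot W$; by mutual genericity the unique witnesses fall into $V$; by almost-homogeneity their membership is decided by $\one$; and replacement in $V$ finishes the argument. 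Collection fails because for each $\alpha<\aleph_\omega^V$ there is in $W$ a surjection of $\aleph_1$ onto $\alpha$ (indeed a bijection once $\alpha$ is collapsed, at the stage $V[G_n]$ with $\alpha<\aleph_n^V$), but any set in $W$ lies in some $V[G_m]$, which contains no surjection of $\aleph_1$ onto the preserved cardinal $\aleph_{m+2}^V$. Crucially, each $\P_n$ is $\sigma$-closed, so the whole forcing $\Coll(\aleph_1,{\lt}\aleph_\omega)$ adds no reals and $P(\omega)^W=P(\omega)^V$; thus $P(\omega)$ is a set in $W$. Fix a nonprincipal ultrafilter $\mu$ on $\omega$ in $V$; since $P(\omega)$ is unchanged, $\mu$ is still an ultrafilter on $\omega$ in $W$, and $\mu\in W$.

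Next I form the ultrapower $j\colon W\to\Ult(W,\mu)$ using functions $f\colon\omega\to W$ lying in $W$; every such $f$ lies in some $V[G_n]$. By replacement in $W$ the map $j$ is cofinal, hence $\Sigma_1$-elementary, and the \Los\ theorem for $\Delta_0$-formulas holds, as noted in the introduction. Since $\mu$ is merely an ultrafilter (not countably complete), $\Ult(W,\mu)$ is in general ill-founded, so there is no Mostowski collapse; nevertheless the ultrapower structure and the map $j$ are definable over $W$ by the method of the discussion after Theorem~\ref{Theorem.LevyCollapseMeasurable}: for $f\in V[G_n]$, Scott's trick is available inside $V[G_n]\satisfies\ZFC$, so $W$ can compute the Scott representative of $[f]_\mu$ inside any sufficiently large transitive $A\in W$ that is closed under passing to lower-rank $=_\mu$-representatives of its members, such as $A=V_\theta^{V[G_n]}$, and all such $A$ agree.

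The heart of the matter is the failure of \Los. Let $h\in V\of W$ be the function $h(n)=\aleph_n^V$. \emph{Claim A:} $W\satisfies\forall n{\in}\omega\,\exists f\,(f$ is a surjection of $\omega_1^W$ onto $h(n))$, which is a $\Pi_2$-assertion with parameters $\omega$, $\omega_1^W$, $h$; indeed for $n\ge 2$ the cardinal $\aleph_n^V$ is collapsed to $\omega_1^W=\omega_1^V$ by stage $V[G_n]\of W$, and for $n\le 1$ a suitable surjection already lies in $V$. \emph{Claim B:} there is no $F\in\Ult(W,\mu)$ with $\Ult(W,\mu)\satisfies\,``F$ is a surjection of $j(\omega_1^W)$ onto $[h]\,"$. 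For if $F=[g]$ with $g\in V[G_m]$, then since ``$g(n)$ is a surjection of $\omega_1^W$ onto $h(n)$'' is $\Delta_0$, the \Los\ theorem for $\Delta_0$-formulas gives $A:=\{n\st g(n)$ is a surjection of $\omega_1^V$ onto $\aleph_n^V\}\in\mu$; as $\mu$ is nonprincipal, $A$ is infinite, so I may choose $n_*\in A$ with $n_*\ge 2$ and $\aleph_{n_*}^V>|\P_m|$ (possible since $|\P_m|<\aleph_\omega^V$ under \GCH). Then $g(n_*)\in V[G_m]$ is a surjection of $\omega_1^V$ onto $\aleph_{n_*}^V$; but $\P_m$ is $\sigma$-closed with $|\P_m|<\aleph_{n_*}^V$, hence preserves both $\omega_1^V$ and the cardinal $\aleph_{n_*}^V>\omega_1^V$, a contradiction.

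Claims A and B yield the theorem. For the failure of \Los\ at the $\Sigma_1$-level: the formula $\varphi(x)\equiv\exists f\,(f$ is a surjection of $\omega_1^W$ onto $x)$ is $\Sigma_1$ with parameter $\omega_1^W$, and $\{n\st W\satisfies\varphi(h(n))\}=\omega\in\mu$ by Claim A, while $\Ult(W,\mu)\not\satisfies\exists f\,(f$ is a surjection of $j(\omega_1^W)$ onto $[h])$ by Claim B, so the \Los\ equivalence fails for $\varphi$. For non-$\Sigma_2$-elementarity: $j$ is $\Sigma_1$-elementary by cofinality, but were it $\Sigma_2$- (equivalently $\Pi_2$-) elementary, the $\Pi_2$-assertion of Claim A would transfer, giving $\Ult(W,\mu)\satisfies\forall n{\in}j(\omega)\,\exists f\,(f$ is a surjection of $j(\omega_1^W)$ onto $j(h)(n))$; instantiating at $n=[\id]\in j(\omega)$ and using $j(h)([\id])=[h]$ (again \Los\ for $\Delta_0$) produces exactly the surjection forbidden by Claim B, so $j$ is not $\Sigma_2$-elementary. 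The main technical obstacle I expect is the forcing bookkeeping for the \ZFCmm-verification --- the absorption property of the collapse and the homogeneity automorphisms --- together with the routine but fussy check that the ill-founded $\Ult(W,\mu)$ and the map $j$ are definable over $W$; by contrast Claim B, which carries the real content, is elementary once the setup is in place.
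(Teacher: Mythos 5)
Your proposal is correct and follows essentially the same route as the paper: the same model $W=\Union_n V[G_n]$, the same observation that $P(\omega)$ is unchanged because each $\P_n$ is countably closed, and the same use of the function $h(n)=\aleph_n^V$ together with \Los\ for $\Delta_0$-formulas to derive the $\Sigma_1$-failure of \Los\ and the non-$\Sigma_2$-elementarity (steps the paper dispatches with ``as in the previous arguments,'' which you have reconstructed faithfully, including the definability of the ill-founded ultrapower inside $W$). One small inaccuracy: the full finite-support product $\Coll(\aleph_1,{\lt}\aleph_\omega)$ \emph{does} add reals to $V[G]$, since a finite-support product of infinitely many nontrivial posets adds a Cohen real; what is true, and all your argument actually needs, is that every real of $W$ lies in some $V[G_n]$ and each $\P_n$ adds none, so that $P(\omega)^W=P(\omega)^V$.
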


\begin{proof}
First, we may assume without loss of generality that $\GCH$ holds in $V$, by passing if necessary to an inner model such as $L$. Suppose that
$G\of\Coll(\aleph_1,{\lt}\aleph_\omega)$ is $V$-generic for
the \Levy\ collapse up to $\aleph_\omega$, that is, the
finite-support product
$\P=\prod_{n\lt\omega}\Coll(\aleph_1,\aleph_n)$.
Let $W=\Union_n V[G_n]$, where $G_n=G\intersect\P_n$ and
$\P_n=\prod_{k\leq n}\Coll(\aleph_1,\aleph_k)$. The model $W$ satisfies $\ZFCmm$, but not collection.
Since each $\P_n$ is countably closed and therefore does
not add subsets to $\omega$, we have
$P(\omega)^V=P(\omega)^{V[G_n]}=P(\omega)^W$, meaning that
$P(\omega)$ exists in $W$. If $\mu$ is any nonprincipal ultrafilter
on $\omega$ in $V$, then it continues to have this property in $W$.
Let $j:W\to\Ult(W,\mu)$ be the ultrapower of $W$ by $\mu$, as
computed by $W$. The model $W$ is able to take the ultrapower since
for every $x\in W$, the collection of all $f:\omega\to x$ forms a
set in $W$ since if $x\in V[G_n]$, then no new functions from
$\kappa$ to $x$ are added by the subsequent collapses. The failure of \Los\ at the $\Sigma_1$-level and the failure of elementarity for the ultrapower map at the $\Sigma_2$-level follow from the violation of collection as in the previous arguments. The Gaifman theorem fails for this embedding as well, as it is $\Sigma_1$-elementary and cofinal.
\end{proof}
\section{Some final remarks}

It is clear that the method of proof of our theorems is
both flexible and general and would easily adapt to the use of other
cardinals than the ones we have used. For example, in section
\ref{Section.LevyCollapseAleph_omega} we could have collapsed to
different cardinals, as in theorems
\ref{Theorem.LevyCollapseAboveMeasurable} and
\ref{Theorem.LevyCollapseAboveAlephomega}, in order to show that
other successor cardinals can be singular, or that several cardinals
might be singular. Indeed,
\cite{Zarach1996:ReplacmentDoesNotImplyCollection} provides a
general framework for producing models of $\ZFCmm$, involving
forcing over the weak product of $\omega$-many copies of a family of
forcing notions, and forming the desired model via increasing finite
portions of the product. The proof that his general framework
succeeds amounts essentially to the particular arguments we made for
our iterations, where we swapped two copies of the forcing $\P_n$ or
$\P_\gamma$ and appealed to weak homogeneity. Although we could
have appealed to Zarach's general framework, we chose simply to give
a direct argument in each case in order to achieve a self-contained
presentation. But we refer any reader interested in producing even
more badly behaved models of $\ZFCmm$ to consult Zarach's general
framework. The violations of \Los\ that we produced in our example can also be summarized in a general framework, as may be deduced from the examples we provided in the paper.
For our own part, we shall from now on prefer the models
of $\ZFCm$ over $\ZFCmm$, and we take the results of this article to
show definitively that $\ZFCmm$ is the wrong theory for most
applications of set theory without power set.

\bibliographystyle{alpha}
\bibliography{HamkinsBiblio,MathBiblio,zfcminus}

\begin{thebibliography}{Ham03}

\bibitem[Abr10]{HandbookOfSetTheory:abraham}
U.~Abraham.
\newblock Proper {F}orcing.
\newblock In {\em Handbook of {S}et {T}heory}, pages 333--394. Springer, New
  York, 2010.

\bibitem[Dev84]{Devlin1984:Constructibility}
Keith~J. Devlin.
\newblock {\em Constructibility}.
\newblock Perspectives in Mathematical Logic. Springer-Verlag, Berlin, 1984.

\bibitem[Gai74]{gaifman:ultrapowers}
H.~Gaifman.
\newblock Elementary embeddings of models of set-theory and certain
  subtheories.
\newblock In {\em Axiomatic set theory (Proc. Sympos. Pure Math., Vol. XIII,
  Part II, Univ. California, Los Angeles, Calif., 1967)}, pages 33--101. Amer.
  Math. Soc., Providence R.I., 1974.

\bibitem[Git11]{gitman:ramsey}
V.~Gitman.
\newblock {R}amsey-like cardinals.
\newblock {\em The Journal of Symbolic Logic}, 76(2):519--540, 2011.

\bibitem[GJ14]{GitmanJohnstone2014:GroundModels}
Victoria Gitman and Thomas~A. Johnstone.
\newblock On ground model definability.
\newblock In {\em Infinity, Computability, and Metamathematics: Festschrift in
  honour of the 60th birthdays of Peter Koepke and Philip Welch},
  Series:Tributes. College publications, London, GB, 2014.

\bibitem[Ham97]{Hamkins97:Seeds}
Joel~David Hamkins.
\newblock Canonical seeds and {P}rikry trees.
\newblock {\em J.~Symbolic Logic}, 62(2):373--396, 1997.

\bibitem[Ham03]{Hamkins2003:ExtensionsWithApproximationAndCoverProperties}
Joel~David Hamkins.
\newblock Extensions with the approximation and cover properties have no new
  large cardinals.
\newblock {\em Fund.~Math.}, 180(3):257--277, 2003.

\bibitem[HJ10]{hamkinsjohnstone:unfoldable}
J.~D. Hamkins and T.~A. Johnstone.
\newblock Indestructible strong unfoldability.
\newblock {\em Notre Dame J. Form. Log.}, 51(3):291--321, 2010.

\bibitem[Jec66]{jech:DCkappa}
T.~Jech.
\newblock Interdependence of weakened forms of the axiom of choice.
\newblock {\em Comment. Math. Univ. Carolinae}, 7:359--371, 1966.

\bibitem[Jec03]{jech:settheory2003}
T.~Jech.
\newblock {\em Set Theory}.
\newblock Springer Monographs in Mathematics. Springer--Verlag, New York, third
  edition, 2003.

\bibitem[Kan03]{kanamori:higher}
A.~Kanamori.
\newblock {\em The Higher Infinite}.
\newblock Springer Monographs in Mathematics. Springer--Verlag, New York,
  second edition, 2003.

\bibitem[Lav07]{Laver2007:CertainVeryLargeCardinalsNotCreated}
Richard Laver.
\newblock Certain very large cardinals are not created in small forcing
  extensions.
\newblock {\em Annals of Pure and Applied Logic}, 149(1--3):1--6, 2007.

\bibitem[L{\'e}v64]{levy:DCkappa}
A.~L{\'e}vy.
\newblock The interdependence of certain consequences of the axiom of choice.
\newblock {\em Fund. Math.}, 54:135--157, 1964.

\bibitem[LS67]{LevySolovay67}
Azriel L\'evy and Robert~M. Solovay.
\newblock Measurable cardinals and the {Continuum Hypothesis}.
\newblock {\em Israel Journal of Mathematics}, 5:234--248, 1967.

\bibitem[Nee10]{HandbookOfSetTheory:neeman}
I.~Neeman.
\newblock Determinacy in {$L(\mathbb R)$}.
\newblock In {\em Handbook of {S}et {T}heory}, pages 1877--1950. Springer, New
  York, 2010.

\bibitem[Sol70]{Solovay1970:AModelInWhichEverySetIsLebesgueMeasurable}
Robert~M. Solovay.
\newblock A model of set-theory in which every set of reals is {L}ebesgue
  measurable.
\newblock {\em Annals of Mathematics. Second Series}, 92(1):1--56, 1970.

\bibitem[Zar82]{zarach:unions_of_zfminus_models}
Andrzej Zarach.
\newblock Unions of {${\rm ZF}^{-}$}-models which are themselves {${\rm
  ZF}^{-}$}-models.
\newblock In {\em Logic {C}olloquium '80 ({P}rague, 1980)}, volume 108 of {\em
  Stud. Logic Foundations Math.}, pages 315--342. North-Holland, Amsterdam,
  1982.

\bibitem[Zar96]{Zarach1996:ReplacmentDoesNotImplyCollection}
Andrzej~M. Zarach.
\newblock Replacement {$\nrightarrow$} collection.
\newblock In {\em G\"odel '96 ({B}rno, 1996)}, volume~6 of {\em Lecture Notes
  Logic}, pages 307--322. Springer, Berlin, 1996.

\end{thebibliography}

\end{document}